\newcounter{point}
\renewcommand{\leq}{\leqslant}
\renewcommand{\geq}{\geqslant}
\numberwithin{equation}{section}
\newcommand{\uple}[1]{\text{\boldmath${#1}$}}
\newcommand{\Cc}{\mathbf{C}}
\newcommand{\Aa}{\mathbf{A}}
\newcommand{\Zz}{\mathbf{Z}}
\newcommand{\Rr}{\mathbf{R}}
\newcommand{\Fp}{{\mathbf{F}_p}}
\newcommand{\Fpt}{{\mathbf{F}^\times_p}}
\newcommand{\Ff}{\mathbf{F}}
\newcommand{\proba}{\mathbf{P}}
\newcommand{\expect}{\mathbf{E}}
\newcommand{\mods}[1]{\,(\mathrm{mod}\,{#1})}
\DeclareMathOperator{\hypk}{Kl}
\DeclareMathOperator{\birch}{Bi}
\newcommand{\HYPK}{\mathcal{K}\ell}
\newcommand{\kpath}{\mathrm{K}}
\newcommand{\tkp}{\tilde{K}}
\newcommand{\ekp}{\mathcal{K}}
\newcommand{\st}{\mathrm{ST}}
\newcommand{\ra}{\rightarrow}
\newcommand{\lra}{\longrightarrow}
\DeclareMathOperator{\Imag}{Im}
\DeclareMathOperator{\Reel}{Re}
\newcommand{\eps}{\varepsilon}
\renewcommand{\rho}{\varrho}
\DeclareMathOperator{\SL}{SL}
\DeclareMathOperator{\SU}{SU}
\newcommand{\demi}{{\textstyle{\frac{1}{2}}}}
\newcommand{\sheaf}[1]{\mathcal{{#1}}}
\DeclareMathSymbol{\gena}{\mathord}{letters}{"3C}
\DeclareMathSymbol{\genb}{\mathord}{letters}{"3E}
\def\multsum{\mathop{\sum\cdots \sum}\limits}
\theoremstyle{plain}
\newtheorem{theorem}{Theorem}[section]
\newtheorem{lemma}[theorem]{Lemma}
\newtheorem{problem}[theorem]{Problem}
\newtheorem{proposition}[theorem]{Proposition}
\newtheorem*{proposition*}{Proposition}
\theoremstyle{remark}
\theoremstyle{definition}
\newtheorem{remark}[theorem]{Remark}
\renewcommand{\geq}{\geqslant}
\renewcommand{\leq}{\leqslant}
\newcommand\sumsum{\mathop{\sum\sum}\limits}
\begin{document}

\title{Kloosterman paths and the shape of exponential sums}
 
 \author{Emmanuel Kowalski}
\address{ETH Z\"urich -- D-MATH\\
  R\"amistrasse 101\\
  CH-8092 Z\"urich\\
  Switzerland} \email{kowalski@math.ethz.ch}


\author{William F. Sawin}
\address{Princeton University, Fine Hall, Washington Road, NJ, USA}
\email{wsawin@math.princeton.edu}

\date{\today,\ \thistime} 

\begin{abstract} We consider the distribution of the polygonal paths
  joining partial sums of classical Kloosterman sums $\hypk_p(a)$, as
  $a$ varies over $\Fpt$, and as $p$ tends to infinity. Using
  independence of Kloosterman sheaves, we prove convergence in the
  sense of finite distributions to a specific random Fourier
  series. We also consider Birch sums, for which we can establish
  convergence in law in the space of continuous functions. We then
  derive some applications.
\end{abstract}


\subjclass[2010]{11T23,11L05,14F20,60F17,60G17,60G50}

\keywords{Kloosterman sums, Kloosterman sheaves, Riemann Hypothesis
  over finite fields, random Fourier series, short exponential sums,
  probability in Banach spaces}

\thanks{E.K. was supported partly by a DFG-SNF lead agency program
  grant (grant 200021L\_153647)}
  
  \thanks{This material is based upon work supported by the National Science Foundation Graduate Research Fellowship under Grant No. DGE-1148900}

\maketitle

\begin{flushright}
  Dedicated to the memory of Marc Yor\\
  \smallskip
  \textit{L'avenir est au hasard}\\
  (Jacques Brel)
\end{flushright}

\section{Introduction}

For a prime number $p$ and $a\in\Fp$, we denote by
$$
\hypk_p(a)=\frac{1}{\sqrt{p}}\sum_{x=1}^{p-1}\psi_p(ax+\bar{x})
$$
the normalized classical Kloosterman sum, where
$\psi_p(z)=e(z/p)=e^{2i\pi z/p}$ is the standard additive character
modulo $p$ and $\bar{x}$ denotes the inverse of $x$ modulo $p$.
\par
Motivated partly by curiosity, arising to a large extent from staring
at the corresponding plots (see~\cite{kloostermania}), we consider in
this paper the geometric properties of the \emph{Kloosterman paths} in
the complex plane. These are defined as follows: for each prime $p$
and integer $a\in\Fpt$, we first let $\gamma_p(a)$ denote the
polygonal path obtained by concatenating the closed segments
$[z_j,z_{j+1}]$ joining the successive partial sums
$$
z_j=\frac{1}{\sqrt{p}}\sum_{1\leq x\leq
  j}\psi_p(ax+\bar{x}),\quad\quad
z_{j+1}=\frac{1}{\sqrt{p}}\sum_{1\leq x\leq j+1}\psi_p(ax+\bar{x}),
$$
for $0\leq j\leq p-2$. We then define a continuous map
$$
t\mapsto K_p(t,a)
$$ 
for $t\in [0,1]$ by parameterizing the path $\gamma_p(a)$, each
segment $[z_j,z_{j+1}]$ being parameterized linearly by $t\in
[j/(p-1),(j+1)/(p-1)]$.
Figure~\ref{fig-1} shows the plot of $t\mapsto K_{10007}(t,1)$, which
should explain clearly the meaning of the definition.
\par
\begin{figure}\label{fig-1}
\caption{Plot of $t\mapsto K_{10007}(t,1)$}
\includegraphics[height=7cm]{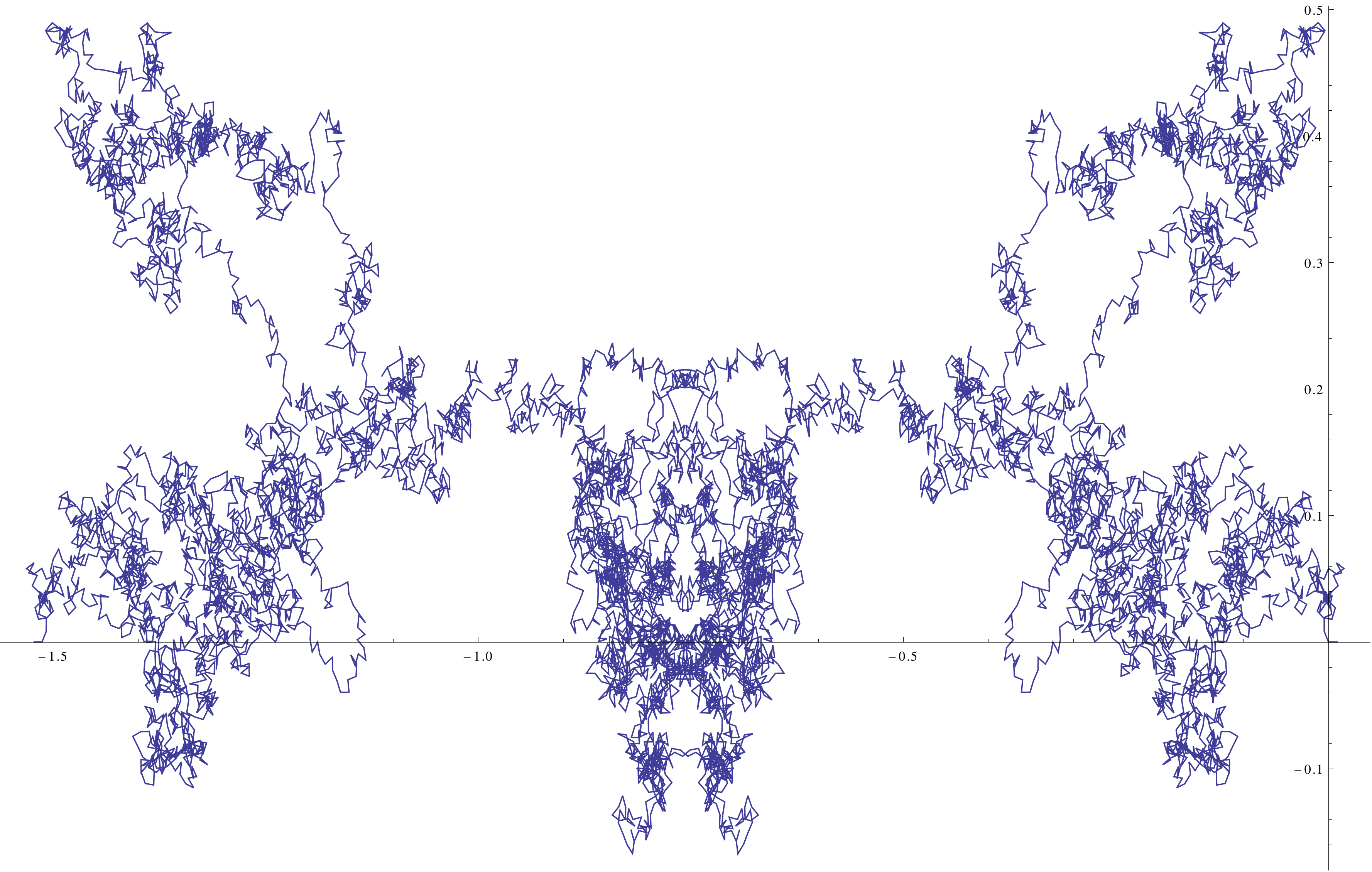}
\end{figure}
\par
We view each $a\mapsto K_p(t,a)$ as a random variable on the finite
probability space
$$
(\Fpt,\text{ uniform probability measure}),
$$
and frequently write simply $K_p(t)$ for this random variable. Thus
$(K_p(t))_{t\in [0,1]}$ is a (simple) stochastic process.
\par
We will use the computation of monodromy groups of Kloosterman sheaves
to find the limiting distribution of $(K_p(t))$ as $p\ra +\infty$, in
the sense of convergence of finite distributions. 

\begin{theorem}\label{th-main}
  Let $(\st_h)_{n\in\Zz}$ denote independent identically distributed
  random variables with distribution equal for all $h\in\Zz$ to the
  Sato-Tate measure
$$
\mu_{ST}=\frac{1}{\pi}\sqrt{1-(x/2)^2}dx
$$
on $[-2,2]$.
\par
\emph{(1)} The random series
$$
\kpath(t)=\sum_{h\in\Zz} \frac{e^{2\pi i ht}-1}{2\pi ih}\st_h
$$
converges almost surely and in law, taking symmetric partial sums,
where the term $h=0$ is interpreted as $t\st_0$. Its limit, as a
random function, is almost surely continuous. In addition, it is
almost surely nowhere differentiable.
\par
\emph{(2)} The sequence of processes $(K_p(t))_{t\in [0,1]}$ converges
to the process $(\kpath(t))_{t\in [0,1]}$, in the sense of convergence
of finite distributions, i.e., for every $k\geq 1$, for every
$k$-tuple $0\leq t_1<\cdots<t_k\leq 1$, the vectors
$$
(K_p(t_1),\ldots,K_p(t_k))
$$
converge in law, as $p\ra +\infty$, to
$$
(\kpath(t_1),\ldots, \kpath(t_k)).
$$
\end{theorem}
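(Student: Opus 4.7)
The plan is to treat the two parts in parallel, with the random Fourier series in (1) arising as the natural limit of a Fourier expansion of the discrete partial sums in (2). For part (2), the first step is to expand the partial sums via discrete Fourier analysis over $\Fp$: writing the indicator of $\{1,\ldots,\lfloor(p-1)t\rfloor\}$ as a combination of additive characters $x\mapsto\psi_p(-hx)$ and using $\psi_p(ax+hx+\bar x)=\psi_p((a+h)x+\bar x)$, one obtains an identity
$$
K_p(t,a)\;=\;\sum_{|h|\leq (p-1)/2}\alpha_h(p,t)\,\hypk_p(a+h),
$$
where $\alpha_h(p,t)$ is an explicit nonrandom coefficient coming from a geometric sum in $\psi_p(h)$. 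A short calculation shows that, for each fixed $h$, $\alpha_h(p,t)\to \alpha_h^\infty(t):=(e^{2\pi iht}-1)/(2\pi ih)$ as $p\to\infty$ (with $\alpha_0(p,t)\to t$), and that $|\alpha_h(p,t)|\ll\min(t,1/|h|)$ uniformly in $p$.

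The key arithmetic input is Katz's theorem on the independence of translated Kloosterman sheaves: for every finite family of distinct integers $h_1,\ldots,h_N$, the translates of the rank-$2$ Kloosterman sheaf by $h_1,\ldots,h_N$ have mutually independent $\mathrm{SU}_2$ monodromies, and hence the random vectors $(\hypk_p(a+h_1),\ldots,\hypk_p(a+h_N))$ on $(\Fpt,\text{uniform})$ converge in law as $p\to\infty$ to a tuple $(\st_{h_1},\ldots,\st_{h_N})$ of independent Sato--Tate variables. Combined with $\alpha_h(p,t)\to\alpha_h^\infty(t)$, this yields, for any fixed truncation $H$ and any $(t_1,\ldots,t_k)$, the convergence in law of $\bigl(\sum_{|h|\leq H}\alpha_h(p,t_j)\hypk_p(a+h)\bigr)_j$ to $\bigl(\sum_{|h|\leq H}\alpha_h^\infty(t_j)\st_h\bigr)_j$.

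The remaining, and most delicate, step is to control the tail $R_{p,H}(t,a):=\sum_{|h|>H}\alpha_h(p,t)\hypk_p(a+h)$ uniformly in $p$. Using $L^2$-orthogonality on $(\Fpt,\text{uniform})$ together with correlation bounds of the form $\sum_a \hypk_p(a+h_1)\overline{\hypk_p(a+h_2)}=O(\sqrt p)$ for $h_1\neq h_2$ (ultimately from Deligne's estimates applied to the relevant tensor-product sheaves), one should obtain $\expect[|R_{p,H}(t)|^2]\ll 1/H + O_H((\log p)^2/\sqrt p)$ uniformly in $t$. A standard diagonal argument then lets one exchange $\lim_p$ and $\lim_H$, concluding part (2). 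This uniform tail estimate is the principal obstacle, since the shift range $|h|\leq (p-1)/2$ grows with $p$ and the orthogonality bounds must be marshalled over long ranges of shifts.

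For part (1), almost sure convergence of the symmetric partial sums follows from Kolmogorov's three-series theorem, the $\st_h$ being bounded iid and the $\alpha_h^\infty(t)$ being square-summable. Almost sure continuity can be obtained via a Kolmogorov continuity criterion applied to the bound $\expect[|\kpath(t)-\kpath(s)|^2]=\sum_h|\alpha_h^\infty(t)-\alpha_h^\infty(s)|^2\ll |t-s|$, immediate from orthogonality and splitting the sum at $|h|\sim|t-s|^{-1}$; boundedness of $\st_h$ supplies the higher moments required for the criterion. Finally, almost sure nowhere differentiability is a classical property of random Fourier series with independent, non-degenerate weights and coefficients of order $1/h$, in the spirit of Paley--Zygmund.
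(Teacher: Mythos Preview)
Your approach to part~(2) is correct and genuinely different from the paper's. The paper uses the \emph{method of moments}: it expands the mixed moment
\[
\frac{1}{p-1}\sum_{a\in\Fpt}\prod_i K_p(t_i,a)^{n_i}\overline{K_p(t_i,a)}^{m_i}
\]
via the same Fourier decomposition you describe, then applies the Katz/Goursat--Kolchin--Ribet input to each resulting sum of products of shifted Kloosterman sums, and matches the answer term-by-term to the corresponding moment of $\kpath$. This requires first checking that the limit is determined by its moments (done via existence of the Laplace transform). Your route---truncate at level $H$, use joint equidistribution of finitely many shifts to get convergence of the truncated sum, and control the tail $R_{p,H}$ in $L^2$ via the two-point correlation bound---avoids moment determinacy and is arguably cleaner; your $L^2$ tail estimate $\ll 1/H + (\log p)^2/\sqrt p$ is correct, and the ``diagonal argument'' is the standard approximation lemma (e.g.\ Billingsley, Theorem~3.2). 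One small omission: $K_p(t,a)$ is the linearly interpolated path, not the step function you Fourier-expand; you need the trivial observation $|K_p(t,a)-\tilde K_p(t,a)|\leq p^{-1/2}$ to pass between them.

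For part~(1), your treatment of convergence and continuity is fine: the second-moment bound $\expect|\kpath(t)-\kpath(s)|^2\ll|t-s|$ is correct, and your remark that subgaussianity upgrades this to fourth (or higher) moments is exactly what makes Kolmogorov's continuity criterion apply. However, your handling of \emph{nowhere differentiability} is a genuine gap. ``In the spirit of Paley--Zygmund'' is not a proof: the Paley--Zygmund inequality gives lower bounds on tails of sums, but turning this into nowhere differentiability of a random Fourier series with coefficients $\asymp 1/h$ requires real work (one must show, almost surely, that for \emph{every} $t$ the increments are not $O(|h|)$). The paper does not prove this from scratch either---it quotes a specific theorem from Kahane's book on random Fourier series (in fact the stronger statement that $\kpath$ is a.s.\ nowhere $\tfrac12$-H\"older). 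You should either cite such a result explicitly or supply an argument; as written, this part is unsubstantiated.
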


\begin{remark}
  (1) Lehmer~\cite{lehmer} and Loxton~\cite{loxton1,loxton2}
  considered the ``graphs'' of various exponential sums, which are the
  analogues of the paths $t\mapsto K_p(t,a)$, but not necessarily over
  finite fields (see for instance the pictures
  in~\cite[p. 127]{lehmer} and~\cite[p. 154--155]{loxton1}).  In
  particular, in~\cite[p 16]{loxton2}, Loxton mentions briefly that
  the paths of Kloosterman sums ``seems to be absolutely chaotic''.
  Our result indicates one precise way in which this is true (or
  false).
\par
(2) Figure~\ref{fig-2} shows a sample simulation of the process
$(\kpath(t))_{t\in [0,1]}$ with $N=10000$ steps, obtained as follows:
values at $j/N$ are simulated for $0\leq j\leq 9999$, by summing the
partial sum of the random series between $-5000$ and $5000$ (using
samples of a Sato-Tate distribution), and then the corresponding
points are interpolated linearly as in the Kloosterman paths.

\begin{figure}
\label{fig-2}
\caption{A sample of the random Fourier series $\kpath(t)$}
\includegraphics[height=7cm]{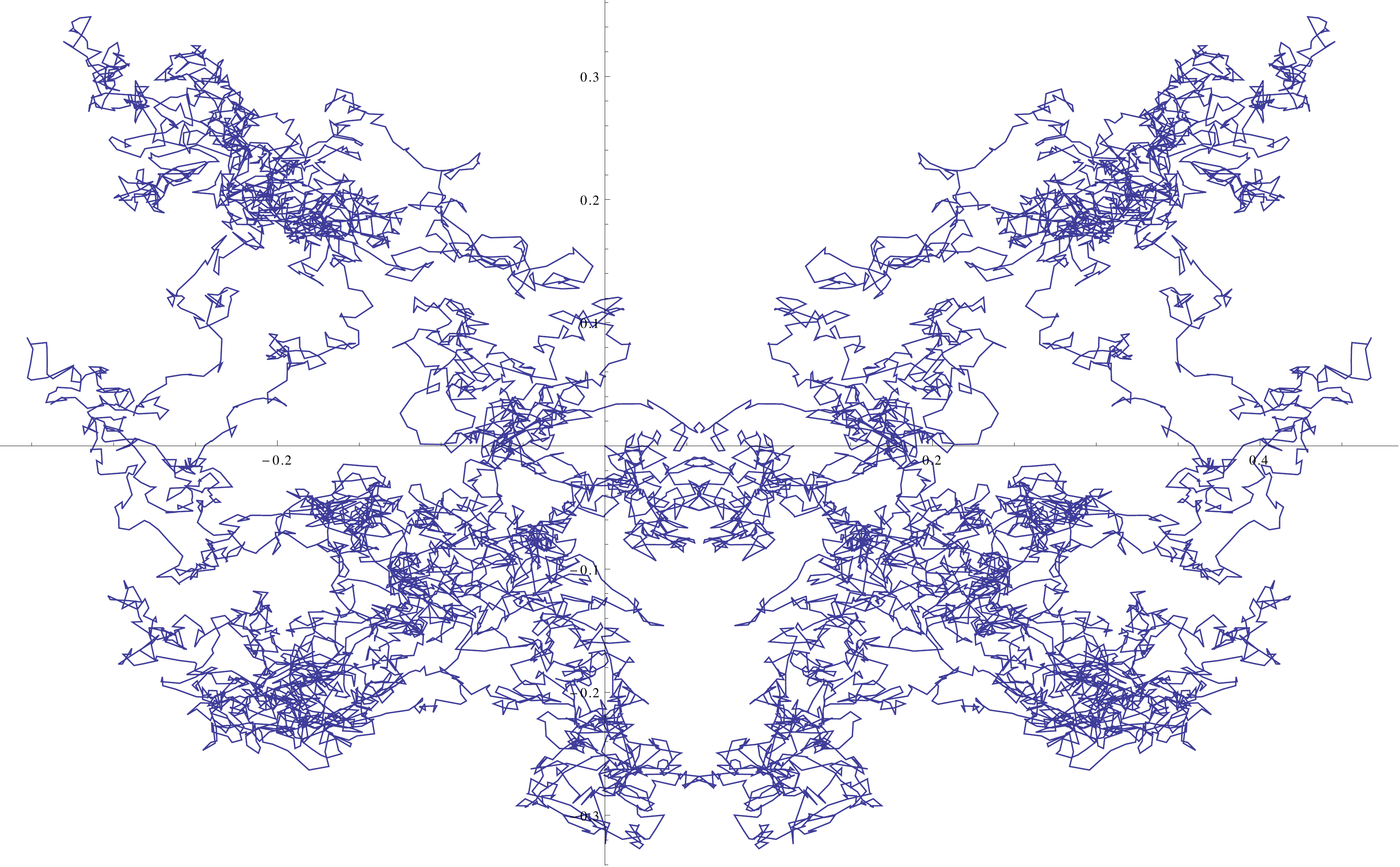}
\end{figure}

%
\end{remark}

Intuitively, the statement of Theorem~\ref{th-main} is not quite
satisfactory, because we may wish to have convergence in law of the
processes as random elements in the space $C([0,1])$ of continuous
functions from $[0,1]$ to $\Cc$. We will see that some highly natural
conjectures concerning short exponential sums lead to this conclusion
(see Section~\ref{sec-tight}). Moreover, we can show unconditionally
such a stronger convergence in law in two cases:
\par
(1) If we consider the family of Kloosterman paths also on average
over all additive characters $x\mapsto \psi_p(\alpha x)$ for
$\alpha\in\Fpt$;
\par
(2) For the partial sums of the family of cubic exponential sums
(sometimes known as Birch sums).
\par
We will also see that this stronger result extends (with possibly
different limiting distribution) to a number of other cases.
\par
We introduce some notation for this purpose. For each prime $p$, we
assume given a probability space $\Omega_p$, and a family
$\mathcal{X}_p=(\xi_p(x))_{x\in\Fp}$
of complex-valued random variables on $\Omega_p$. We then define the
associated path process $K_p^{\mathcal{X}}(t)$ for $t\in [0,1]$
by parameterizing by $t\in [0,1]$ the
polygonal path joining the successive partial sums
$$
\omega\mapsto \frac{1}{\sqrt{p}}\sum_{0\leq x\leq j-1} \xi_p(x,\omega)
$$
for $\omega\in\Omega_p$ and $0\leq j\leq p-1$, which we interpolate
between $j/p$ and $(j+1)/p$. 
\par
We can also have a family $\mathcal{X}_p=(\xi_p(x))_{x\in\Fpt}$
parameterized by $\Fpt$ (as in the case of Theorem~\ref{th-main}), and
then we define $K_p^{\mathcal{X}}(t)$ for $t\in [0,1]$ as in that
case, interpolating between $j/(p-1)$ and $(j+1)/(p-1)$.
\par
We view these processes as $C([0,1])$-valued random processes. We will
study them in a special case, and mention possible generalizations at
the end of the paper.

\begin{theorem}\label{th-main2}
  For $\Omega_p=\Fpt$ with the uniform probability measure and
$$
\xi_p(x,a)=\psi_p(x^3+ax),
$$
for $a\in\Fpt$ and $x\in\Fp$, the processes $(K^{\mathcal{X}}_p(t))$
converge to the process $(\kpath(t))_{t\in [0,1]}$ in the sense of
convergence in law in $C([0,1])$.
\end{theorem}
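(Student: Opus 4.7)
The plan is the classical two-step route to convergence in law in $C([0,1])$: first prove convergence of finite-dimensional distributions, and then prove tightness of the family $(K_p^{\mathcal{X}})_p$ in $C([0,1])$.

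For finite-dimensional convergence I would follow the argument for Theorem~\ref{th-main} verbatim, with the Kloosterman sheaves replaced by the Birch sheaves attached to $\psi_p(x^3+tx)$. Discrete Fourier inversion on $\Fp$ writes
$$K_p^{\mathcal{X}}(t,a)\;=\;\sum_{|h|\leq p/2}c_h(t)\,\birch_p(a+h),\qquad \birch_p(b):=\frac{1}{\sqrt{p}}\sum_{x\in\Fp}\psi_p(x^3+bx),$$
with explicit coefficients $c_h(t)$ which tend to $(1-e^{-2\pi iht})/(2\pi ih)$ as $p\to\infty$. By Birch's theorem, re-established sheaf-theoretically by Katz, the Birch sheaf has geometric monodromy equal to $\SL_2$, and tensor products of two distinct additive shifts of it have no geometric invariants. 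Deligne's equidistribution theorem then delivers, for every finite $H\subset\Zz$, joint convergence of $(\birch_p(a+h))_{h\in H}$ to an independent Sato--Tate family, from which fidi convergence of $K_p^{\mathcal{X}}$ to $\kpath$ follows exactly as in Theorem~\ref{th-main}.

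For tightness I would apply Kolmogorov's criterion, reducing to a fourth-moment bound
$$\expect_{a\in\Fpt}\abs{K_p^{\mathcal{X}}(t)-K_p^{\mathcal{X}}(s)}^{4}\;\leq\; C\,|t-s|^{1+\alpha}$$
uniform in $p,s,t$, for some fixed $\alpha,C>0$. Up to errors coming from the linear interpolation, the increment equals $p^{-1/2}\sum_{x\in I}\psi_p(x^3+ax)$ for an interval $I\subset\Fp$ of length $\approx|t-s|p$. Opening the fourth power and averaging over $a$ forces the constraint $x_1+x_2\equiv y_1+y_2\pmod p$ and leaves
$$\frac{1}{p^2}\sum_{\substack{x_1,x_2,y_1,y_2\in I\\ x_1+x_2\equiv y_1+y_2}}\psi_p\bigl(x_1^3+x_2^3-y_1^3-y_2^3\bigr)+O(p^{-1}).$$
The diagonal $\{x_1,x_2\}=\{y_1,y_2\}$ contributes at most $2|I|^2/p^2\leq 2|t-s|^2$, already of the required shape. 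For the off-diagonal contribution one writes $y_1=x_1-u$, $y_2=x_2+u$ with $u\in\Fp^\times$ and $u\neq x_1-x_2$; a direct expansion collapses the cubic phase to $3u(x_1+x_2)(x_1-x_2-u)$, which for fixed $(u,x_1-x_2)$ is \emph{linear} in $x_1+x_2$ with nonzero coefficient $3u(x_1-x_2-u)\in\Fp^\times$. Pólya--Vinogradov on the inner linear sum, followed by standard estimates for sums of the form $\sum_v\min(|I|,\|cv/p\|^{-1})$, produces the required off-diagonal bound $C|t-s|^{1+\alpha}$.

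The entire substantive difficulty lies in the off-diagonal fourth-moment bound. The reason Birch sums are tractable where classical Kloosterman sums are not is precisely this: the cubic algebraic structure converts the diagonal collapse into a phase which, for each fixed choice of two of the three auxiliary variables, is linear with nonzero coefficient in $\Fp^\times$, so completion delivers square-root cancellation for free. The analogue for Kloosterman sums leaves the obstruction $\overline{x_1}+\overline{x_2}-\overline{y_1}-\overline{y_2}$, which admits no such linearisation; this is exactly why unconditional tightness of the original process $(K_p(t))$ remains conjectural and why the authors must restrict to the Birch family (or to the additive-character-averaged Kloosterman setting) to obtain process-level convergence.
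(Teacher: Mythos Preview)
Your plan for finite-dimensional convergence is correct and matches the paper's argument exactly (the paper also verifies the no-self-twist condition $[+\tau]^*\sheaf{G}\not\simeq\sheaf{G}\otimes\sheaf{L}$ for the Birch sheaf to apply Goursat--Kolchin--Ribet).

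For tightness your route is genuinely different from the paper's, and as stated it has a gap. The paper does \emph{not} aim for a single uniform fourth-moment Kolmogorov bound; instead Lemma~\ref{lm-tight} splits into three regimes of $|t-s|$. For $|t-s|\geq p^{-1/2+\delta_1}$ the fourth moment is compared, via the Deligne--Katz estimate (condition~(2)), to that of the subgaussian Sato--Tate model $X_p(t)$, giving $\ll|t-s|^2+p^{-1/2}(\log p)^4$; for $|t-s|\leq p^{-1/2-\delta_1}$ a trivial bound and a high moment suffice; only in the critical window $|I|\asymp p^{1/2}$ is a genuine short-sum input needed, and there the paper invokes the \emph{pointwise} Weyl bound $\sum_{x\in I}\psi_p(ax+x^3)\ll|I|^{1/4+\eps}p^{1/4}$, raised to a large power.

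Your averaged computation tries to cover all ranges at once. The cubic linearisation is correct (it is Weyl differencing performed after the $a$-average), but the conclusion fails for large $|t-s|$: the standard estimate $\sum_{m}\min(|I|,\|cm/p\|^{-1})\ll |I|+p\log p$ gives, after summing over $u$ and dividing by $p^2$, only off-diagonal $\ll|t-s|^2+|t-s|\log p$, and your claimed $O(p^{-1})$ for the $\Fpt$-versus-$\Fp$ discrepancy is really $O(|I|^4/p^3)=O(|t-s|^4 p)$, which blows up when $|t-s|\asymp 1$. What your argument \emph{does} deliver cleanly is the critical-window estimate: for $p^{1/2-\delta_1}\leq|I|\leq p^{1/2+\delta_1}$ your diagonal, off-diagonal and error terms are all $\ll p^{-1+\eps}$, which verifies condition~(3) of Lemma~\ref{lm-tight} with $\alpha=4$. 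So your linearisation is a legitimate (and arguably more elementary) substitute for the Weyl bound there, but the large-$|t-s|$ regime still needs the monodromy-based comparison the paper supplies; you cannot bypass Deligne--Katz in the tightness step.
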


\begin{remark}
  In Figure~\ref{fig-3}, we plot the function $t\mapsto
  K^{\mathcal{X}}_{10007}(t,1)$ for this choice of $\mathcal{X}$.
\begin{figure}\label{fig-3}
  \caption{Path of Birch sum $\birch_{10007}(1)$}
\includegraphics[height=7cm]{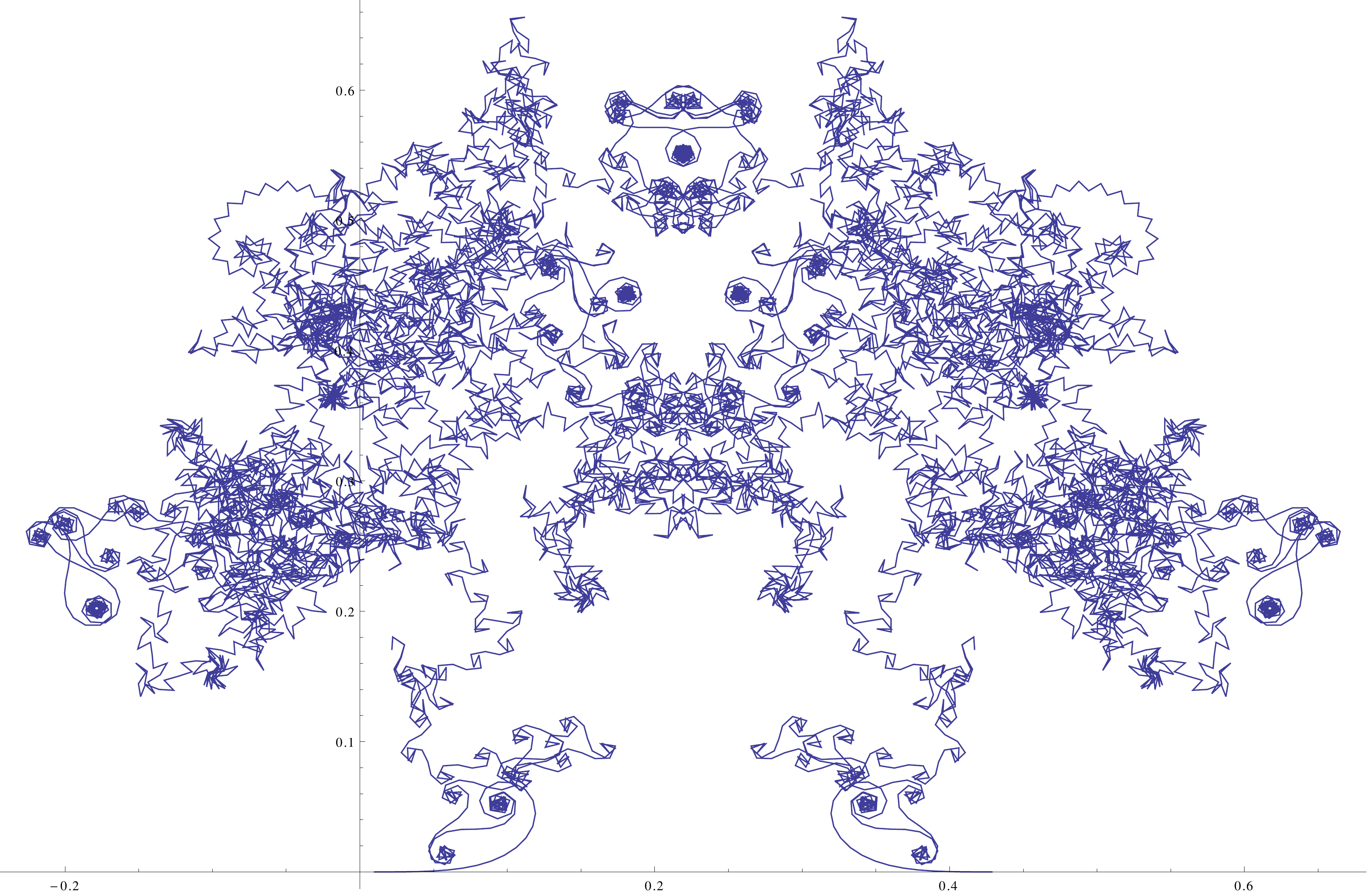}
\end{figure}
\par
The complete character sums in this case are given by
$$
\birch_p(a)=\frac{1}{\sqrt{p}}\sum_{x\in\Fp}\psi_p(ax+x^3).
$$
\par
These sums were considered by Birch~\cite[\S 3]{birch}, who
conjectured that they are Sato-Tate-distributed (on average over
$a$). This statement was first proved by
Livn\'e~\cite{livne}. However, his result (and its proof) would not
suffice for our applications, because the argument lacks the
group-theoretic interpretation of the Sato-Tate distribution.  We rely
crucially on Deligne's equidistribution theorem that provides it (as a
very special case), as well as on algebraic properties of the
monodromy group.

%
\end{remark}

For Kloosterman sums, as we mentioned, we can also use average over
the additive character to get convergence in law:

\begin{theorem}\label{th-main3}
For $p$ prime, $(\alpha,a)\in\Fpt\times\Fpt$, let
$$
t\mapsto \ekp_p(t;\alpha,a)
$$
denote the continuous function interpolating the partial sums
$$
\frac{1}{\sqrt{p}}\sum_{1\leq x\leq j}\psi_p(\alpha(ax+\bar{x})).
$$
\par
The processes $(\ekp_p(t))$ on the probability spaces
$$
(\Fpt\times\Fpt,\text{uniform probability})
$$
converge to the process $(\kpath(t))_{t\in [0,1]}$ in the sense of
convergence in law in $C([0,1])$.
\end{theorem}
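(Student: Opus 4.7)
I will prove the theorem by combining convergence of finite distributions with tightness of $(\ekp_p)_p$ in $C([0,1])$. Finite-dimensional convergence follows by adapting the Fourier/monodromy approach of Theorem~\ref{th-main} to the enlarged probability space $\Fpt\times\Fpt$, the extra $\alpha$-averaging causing no complication. The real substance of Theorem~\ref{th-main3} lies in establishing tightness, and I expect the averaging over $\alpha$ to furnish precisely the additional cancellation that forces it.

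\noindent\textbf{Reduction via Fourier completion.} By the Kolmogorov--Chentsov criterion, tightness follows from a uniform bound
$$
\expect\bigl[\,|\ekp_p(t)-\ekp_p(s)|^{q}\,\bigr]\ll |t-s|^{1+\delta}
$$
for some even $q\geq 4$ and some $\delta>0$; the piecewise-linear nature of $\ekp_p$ (Lipschitz constant $\ll\sqrt p$) handles the regime of very small $|t-s|$. I decompose the increment---essentially a short Kloosterman sum---by P\'olya--Vinogradov Fourier completion. Writing the characteristic function of $(ps,pt]$ as a Fourier series with coefficients $d_h(s,t)$ satisfying $|d_h|\ll\min(|t-s|,1/|h|)$, and using the identity $\hypk_p(\alpha,\beta)=\hypk_p(\alpha\beta)$ coming from $x\mapsto\alpha x$, one obtains
$$
\ekp_p(t;\alpha,a)-\ekp_p(s;\alpha,a)=\sum_{|h|\leq p/2} d_h(s,t)\,\hypk_p\bigl(\alpha(\alpha a+h)\bigr)+O(p^{-1/2}).
$$
Note that $\sum_h|d_h(s,t)|^2\ll|t-s|$ by Parseval.

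\noindent\textbf{Moment estimate and main obstacle.} Setting $b:=\alpha^2 a$ turns $\alpha(\alpha a+h)$ into $b+\alpha h$, and the $q$-th moment of the above quantity becomes a weighted sum over $q$-tuples $\mathbf{h}=(h_1,\dots,h_q)$ of the two-variable averages
$$
M(\mathbf{h})=\frac{1}{(p-1)^2}\sum_{\alpha,\,b\in\Fpt}\prod_{i=1}^{q}\hypk_p(b+\alpha h_i)^{\eps_i}.
$$
The ``diagonal'' contribution, where the $h_i$'s pair off under conjugation, is bounded via the Weil estimate and Parseval by $(\sum_h|d_h|^2)^{q/2}\ll|t-s|^{q/2}$, which is already of the desired shape. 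The crucial off-diagonal bound is $M(\mathbf{h})\ll p^{-1}$ when the shifts $\alpha h_i$ are pairwise distinct and nonzero; this asserts square-root cancellation in \emph{both} variables $\alpha$ and $b$, and is exactly the dividend paid by the extra $\alpha$-averaging compared to the single-path setting. The main obstacle is the algebro-geometric input behind this bound: one must show that the appropriate tensor product of translates and duals of the Kloosterman sheaf $\hk{2}$, viewed as a sheaf on $\Gm\times\Gm$ with parameters $\alpha$ and $b$, has no trivial geometric constituents, so that Deligne's Riemann Hypothesis in dimension two applies. This is a monodromy statement about tensor products of translates of the Kloosterman sheaf, in the spirit of Katz's results, accompanied by a stratification argument handling the degenerate configurations ($h_i=0$, $h_i=h_j$, or incidental coincidences $\alpha h_i=\alpha h_j$ for special $\alpha$) via elementary moment bounds. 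Once this sheaf-theoretic input is in hand, combining the diagonal moment bound $|t-s|^{q/2}$, the off-diagonal bound $\ll p^{-1}(\log p)^{q}$, and the trivial Lipschitz bound in the range $|t-s|\lesssim p^{-c}$ yields a Kolmogorov--Chentsov estimate with a positive $\delta$ (e.g.\ $q=4$ and any $\delta<1/3$). Tightness together with finite-dimensional convergence then gives convergence in law in $C([0,1])$ to the process $\kpath$.
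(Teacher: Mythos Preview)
Your finite-distribution argument is fine and matches the paper's. The divergence is in the tightness step, where you take a substantially harder route than necessary and leave a real gap.

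You reduce tightness to the two-variable moment
\[
M(\mathbf{h})=\frac{1}{(p-1)^2}\sum_{\alpha,b\in\Fpt}\prod_{i=1}^{q}\hypk_p(b+\alpha h_i)^{\eps_i},
\]
and claim the off-diagonal bound $M(\mathbf{h})\ll p^{-1}$. This is a genuine two-dimensional statement: the inner sum over $b$ already costs all of the $p^{-1/2}$ available from the one-dimensional monodromy argument (Lemma~\ref{lm-katz}), so to reach $p^{-1}$ you must extract a second square-root from the $\alpha$-sum. That amounts to controlling the Frobenius trace on the $H^1_c$ of the tensor product of shifted Kloosterman sheaves, viewed as a constructible sheaf in the parameter $\alpha$, and showing it has no heavy constituents. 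This is plausible but is not a consequence of the one-variable Goursat--Kolchin--Ribet input you cite; it is a separate cohomological computation that you have not carried out. As written, the proposal is incomplete at exactly the point you flag as the ``main obstacle''.

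The paper sidesteps this entirely. Instead of Fourier-completing the increment and estimating products of Kloosterman sums, it bounds the fourth moment of the short sum itself,
\[
\frac{1}{(p-1)^2}\sum_{\alpha,a}\Bigl|\frac{1}{\sqrt{p}}\sum_{x\in I}\psi_p(\alpha(ax+\bar{x}))\Bigr|^{4},
\]
by expanding and applying orthogonality in $a$ and in $\alpha$ directly. The $a$-average forces $x_1+x_2=x_3+x_4$; the $\alpha$-average forces $x_1^{-1}+x_2^{-1}=x_3^{-1}+x_4^{-1}$. This pair of equations has $O(|I|^2)$ solutions in $I^4$ by elementary algebra, yielding a bound $\ll |I|^2p^{-2}+|I|^4p^{-3}$, which is $\ll p^{-1}$ for $|I|\approx p^{1/2}$. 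No sheaves, no monodromy, no second application of Deligne: the extra averaging over $\alpha$ is exploited through a second orthogonality relation rather than through a second layer of $\ell$-adic cohomology. This feeds into the general tightness criterion (Lemma~\ref{lm-tight}) and finishes the proof.

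So your strategy could in principle be completed, but the missing two-dimensional sheaf input is not a formality, and the paper's point is precisely that one can avoid it by Kloosterman's original fourth-moment trick.
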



We recall (see, e.g.,~\cite[Ch. 1, \S 1]{billingsley}
or~\cite[Def. 0.5.5]{revuz-yor}) that the convergence in law to
$\kpath$ of any sequence of $C([0,1])$-valued processes
$(L_p(t))_{t\in [0,1]}$ in $C([0,1])$ means that for any map
$$
\varphi\,:\, C([0,1])\lra \Cc
$$
which is continuous and bounded on $C([0,1])$, with respect to the
topology of uniform convergence, we have
$$
\lim_{p\ra +\infty}\expect(\varphi(L_p))= \expect(\varphi(\kpath)).
$$
\par
This condition is stronger than the convergence of finite-dimensional
distributions. Indeed, given the convergence of finite-dimensional
distributions of $(L_p(t))$ to those of $(\kpath(t))$, one knows that
convergence in law is equivalent to the weak-compactness property
known as \emph{tightness} (this is Prokhorov's Theorem, see
e.g.~\cite[Th. 7.1]{billingsley}).  
\par
As an application of Theorems~\ref{th-main2} and~\ref{th-main3}, we
will obtain fairly sharp bounds for the probability of large values of
partial sums of the corresponding families of exponential sums:

\begin{theorem}\label{th-tails}
  There exists $c>0$ such that we have
\begin{multline*}
  c^{-1}\exp(-\exp(cA)) \\\leq \lim_{p\ra +\infty} \frac{1}{p-1}
  \Bigl|\Bigl\{a\in\Fpt\,\mid\,
\max_{0\leq j\leq p-1}
\frac{1}{\sqrt{p}} \Bigl|\sum_{0\leq x\leq j}\psi_p(ax+x^3)\Bigr|
\geq A\Bigr\}\Bigr| \\\leq
  c\exp(-\exp(c^{-1}A)),
\end{multline*}
as well as
\begin{multline*}
  c^{-1}\exp(-\exp(cA)) \\\leq \lim_{p\ra +\infty} \frac{1}{(p-1)^2}
  \Bigl|\Bigl\{(\alpha,a)\in\Fpt\times\Fpt\,\mid\, \max_{1\leq j\leq
    p-1} \frac{1}{\sqrt{p}} \Bigl|\sum_{1\leq x\leq
    j}\psi_p(\alpha(ax+\bar{x}))\Bigr| \geq A\Bigr\}\Bigr| \\\leq
  c\exp(-\exp(c^{-1}A))
\end{multline*}
for all $A>0$. In particular, the two limits exist, and in fact they
are equal.
\end{theorem}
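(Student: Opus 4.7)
The plan is to reduce both tail inequalities to matching bounds for $\|\kpath\|_\infty$, using the convergence in law in $C([0,1])$ provided by Theorems~\ref{th-main2} and~\ref{th-main3}. On each polygonal path with vertices $z_0,\ldots,z_{p-1}$, convexity of $|\cdot|$ on $\Cc$ forces $\sup_{t\in[0,1]}|K_p^{\mathcal{X}}(t)|=\max_{j}|z_j|$, so the event in the theorem is exactly $\{\|K_p^{\mathcal{X}}\|_\infty\geq A\}$. Since $f\mapsto\|f\|_\infty$ is continuous on $C([0,1])$, the continuous mapping theorem yields convergence in law $\|K_p^{\mathcal{X}}\|_\infty\to\|\kpath\|_\infty$, and it then suffices to establish the double-exponential tails
\begin{equation*}
c^{-1}\exp(-\exp(cA))\leq\proba(\|\kpath\|_\infty\geq A)\leq c\exp(-\exp(c^{-1}A))
\end{equation*}
for the limiting process, together with continuity of the distribution function of $\|\kpath\|_\infty$ at every $A>0$, so that Portmanteau transfers the bounds to $\|K_p^{\mathcal{X}}\|_\infty$ for every $A>0$.

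For the upper bound, truncate $\kpath=\kpath_{\leq N}+\kpath_{>N}$ at frequency $N$. Since $|\st_h|\leq 2$ and $|(e^{2\pi i h t}-1)/(2\pi i h)|\leq 1/(\pi|h|)$, one has deterministically $\|\kpath_{\leq N}\|_\infty\leq C\log N$. Choose $N\asymp e^{A/(2C)}$, so that $\|\kpath\|_\infty\geq A$ forces $\|\kpath_{>N}\|_\infty\geq A/2$. The residual is a random trigonometric series with bounded i.i.d.\ Sato--Tate coefficients and deterministic weights of $\ell^2$-norm $O(1/\sqrt{N})$; by Salem--Zygmund type estimates combined with sub-Gaussian concentration (Borell's inequality, or Kahane's inequality via the contraction principle), $\|\kpath_{>N}\|_\infty$ is sub-Gaussian with mean $O(\sqrt{\log N/N})$ and variance proxy $O(1/N)$, which gives $\proba(\|\kpath_{>N}\|_\infty\geq A/2)\leq\exp(-cA^{2}N)$. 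Substituting $N\asymp e^{A/(2C)}$ produces the desired $\exp(-\exp(c^{-1}A))$.

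The lower bound is by direct construction. Evaluating at $t=1/2$, where $(e^{\pi i h}-1)/(2\pi i h)$ vanishes for nonzero even $h$ and equals $-1/(\pi i h)$ for odd $h$, one gets
\begin{equation*}
\kpath(1/2)=\tfrac{1}{2}\st_0+\frac{i}{\pi}\sum_{h\geq 1,\ h\text{ odd}}\frac{\st_h-\st_{-h}}{h}.
\end{equation*}
On the event $E_N$ that $\st_h\in[1,2]$ and $\st_{-h}\in[-2,-1]$ for every odd $h\in[1,N]$, the first $N$ terms force $|\kpath(1/2)|\geq c\log N$, while the contribution of frequencies beyond $N$ is $<c\log N/2$ with probability $\Omega(1)$ by the same concentration argument. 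Each constraining factor has probability $q\in(0,1)$ coming from $\mu_{ST}([1,2])>0$, and by independence $\proba(E_N)\geq q^{\Theta(N)}$; the choice $N\asymp e^{cA}$ then gives $\proba(\|\kpath\|_\infty\geq A)\geq c^{-1}\exp(-\exp(cA))$.

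Continuity at every $A>0$ of the distribution function of $\|\kpath\|_\infty$ follows from an absence-of-atoms argument: conditional on $(\st_h)_{h\neq 0}$, the map $s\mapsto\sup_{t\in[0,1]}|st+\tilde{\kpath}(t)|$ is convex and $1$-Lipschitz in $s$ and almost surely not constant on any subinterval of $[-2,2]$ (its derivative tends to $\pm 1$ at the endpoints of a wide range of $s$), so pushing the absolutely continuous law of $\st_0$ through it yields a continuous conditional distribution, and integrating out recovers continuity of the unconditional law. The main technical obstacle is the sharp quantitative form of Salem--Zygmund needed for $\kpath_{>N}$: the tail estimate must retain strong enough dependence on $N$ that, under the substitution $N=e^{\Theta(A)}$, the resulting bound is genuinely double-exponential in $A$ — this is where one uses in an essential way both the $1/|h|$ decay of the deterministic Fourier weights and the boundedness of the Sato--Tate variables.
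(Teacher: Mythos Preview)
Your overall architecture matches the paper: reduce to tail bounds for $\|\kpath\|_\infty$ via the continuous mapping theorem, then prove the two-sided double-exponential estimate for the limiting random Fourier series. The lower bound is essentially the same argument as the paper's (evaluate at $t_0=1/2$, force finitely many Sato--Tate variables into a half-interval, and control the tail by independence); the paper packages this inside Proposition~\ref{pr-tail}.

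For the upper bound you take a genuinely different route. The paper applies Talagrand's deviation inequality for sums of bounded independent vectors in a Banach space, which produces a tail of the form $\exp(-s^2/16)$ once $A\geq m+4N(s)$, and then computes the $K$-functional $N(s)\leq c\log(cs)$ in Lemma~\ref{lm-interp} by a direct estimate using $\|\varphi_n\|_\infty\leq 1/(2\pi n)$. Your truncation approach (deterministic bound $\|\kpath_{\leq N}\|_\infty\leq C\log N$, then sub-Gaussian concentration for $\|\kpath_{>N}\|_\infty$) is more elementary in spirit and avoids the $K$-functional machinery. It works, but be explicit that you need concentration for the \emph{sup norm}, not merely pointwise sub-Gaussianity: cite either Talagrand's concentration for bounded independent summands in $C([0,1])$ (with weak variance $\sup_{\|\lambda\|\leq 1}\sum_{|h|>N}|\lambda(\varphi_h)|^2\ll 1/N$), or run a dyadic-block chaining so that Salem--Zygmund plus a union bound over nets gives both $\expect\|\kpath_{>N}\|_\infty\ll\sqrt{(\log N)/N}$ and the sub-Gaussian tail. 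As written, the step ``by Salem--Zygmund type estimates combined with sub-Gaussian concentration'' is the load-bearing part and deserves one more line.

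Your continuity argument for the law of $\|\kpath\|_\infty$ is a good observation --- the paper's statement that ``the two limits exist'' does rely on every $A>0$ being a continuity point, and the paper does not verify this. However, your justification has a gap. Conditioning on $(\st_h)_{h\neq 0}$ gives a convex $1$-Lipschitz map $g(s)=\sup_t|st+\tilde\kpath(t)|$, but convex functions \emph{can} be constant on a subinterval (a flat minimum), and your parenthetical that $g'(s)\to\pm 1$ for large $|s|$ says nothing about $s\in[-2,2]$. What your argument does give cleanly is that for $A>\min_s g(s)$ the level set $g^{-1}(A)$ has at most two points, hence $\mu_{ST}$-measure zero; the only possible conditional atom is at $A=\min_s g(s)$. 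To finish, you must argue either that $g$ almost surely has no flat part, or that the random variable $\min_s g(s)$ (a function of $\tilde\kpath$) has continuous law --- neither of which you have done. One clean way out is to note that without continuity one still gets, via Portmanteau, the stated upper bound on the $\limsup$ and the stated lower bound on the $\liminf$ (using the lower bound for $\|\kpath\|_\infty$ at $A+\eps$ and letting $\eps\to 0$); the existence of the limit itself is then the only casualty, and that can be repaired by a separate absence-of-atoms argument, for instance by conditioning on all $\st_h$ with $h\neq 1$ and showing the resulting map of $\st_1$ is a.s.\ real-analytic and nonconstant in $s$.
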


As we will clearly see, convergence of finite distributions and
tightness are valid in great generality for many processes
corresponding to partial sums of one-variable exponential sums. They
follow naturally from two important properties:
\par
(1) Computation and properties of the \emph{monodromy group} of
certain families of exponential sums (these are the Kloosterman sums
for Theorems~\ref{th-main} and~\ref{th-main3}, and the Birch sums for
Theorem~\ref{th-main2}, but other cases lead to slightly different
sums). This is used to prove convergence of finite distributions.
\par
(2) Existence of estimates (in a suitable averaged form) of
\emph{short} sums of the original summands, over (arbitrarily located)
intervals of length $y$ close to $\sqrt{p}$ in logarithmic scale; this
is used to prove tightness. Such sums are at the edge of the so-called
\emph{P\'olya-Vinogradov range}, which refers to $y$ a bit larger than
$\sqrt{p}\log p$, and which can be treated in considerable generality.
\par
The first ingredient is exceptionally deep: it involves all of
Deligne's work on the Riemann Hypothesis over finite
fields~\cite{weilii}, as well as many additional algebraic and
geometric results.  The second ingredient is also very delicate, and
is not currently known in great generality, although one can certainly
conjecture that it should be true under very general conditions. (It
is, at the current time, not known for Kloosterman sums when averaging
only over $a$, which is the reason that Theorem~\ref{th-main3}
requires additional averaging over the additive character).
\par
It is a very appealing and striking feature of this work that it shows
how these two arithmetic aspects of exponential sums are unified to
contribute to a single clear conclusion.  Since both properties are
very important in many applications in different ways, this is quite
an interesting phenomenon. In fact, some sporadic relations between
these two types of properties have already appeared (e.g., in the
proof of Burgess's bounds for short character sums, see
e.g.~\cite[Th. 12.6]{ant}, or in more recent work of Fouvry and
Michel~\cite{fouvry-michel} on exponential sums over primes). None is,
as far as we know, as direct and clearly focused as the phenomenon
that we present.
\par
The outline of the remainder of the paper is as follows: in
Section~\ref{sec-finite}, we present the proof of
Theorem~\ref{th-main}, and more generally of convergence of finite
distributions for the situation of Theorem~\ref{th-main2} (and
potentially many other cases, possibly for a different random Fourier
series than $\kpath(t)$). In Section~\ref{sec-tight}, we address the
additional condition of tightness, and relate it to short exponential
sums. In Section~\ref{sec-applis}, we give applications, especially
proving Theorem~\ref{th-tails}. Finally, in Section~\ref{sec-other} we
make a few remarks concerning other potential cases of convergence in
law of paths of exponential sums, as well as concerning a few earlier
works that have some similarity with this paper (besides those of
Loxton already mentioned).

\subsection*{Acknowledgment.} Thanks to J. Bober, L. Goldmakher,
A. Granville and D. Koukoulopoulos for comments concerning their
work~\cite{bggk}.

\subsection*{Notation.} For a prime $p$ and a function $\varphi\,:\,
\Fp\lra \Cc$, we denote by
$$
\hat{\varphi}(h)=\frac{1}{\sqrt{p}}\sum_{x\in \Fp}\varphi(x)\psi_p(hx)
$$
the unitarily normalized Fourier transform modulo $p$. We then have
the inversion formula
$$
\varphi(x)=\frac{1}{\sqrt{p}}\sum_{h\in\Fp}\hat{\varphi}(x)\psi_p(-hx)
$$
and the Plancherel identity
$$
\sum_{x\in\Fp}\varphi_1(x)\overline{\varphi_2(x)}=
\sum_{h\in\Fp}\hat{\varphi}_1(h)\overline{\hat{\varphi}_2(h)}.
$$
\par
For any probability space $(\Omega,\Sigma,\proba)$, we denote by
$\proba(A)$ the probability of some event and by $\expect(X)$ the
expectation of a (complex-valued) random variable. We sometimes use
different probability spaces, but keep the same notation for all
expectations and probabilities.
\par
For $\sigma>0$, a $\sigma$-subgaussian (real-valued) random variable
$N$ is a random variable such that
$$
\expect(e^{\lambda N})\leq e^{\sigma^2\lambda^2/2}
$$
for all $\lambda\in\Rr$. We then have
$$
\expect(|N|^k)\leq c_k\sigma^{k}
$$
for any $k\geq 0$, where $c_k=k2^{k/2}\Gamma(k/2)$, in particular
$c_4=16$.
\par
For $\sigma>0$, it will be convenient to say that a complex-valued
random variable $N=R+iI$ is $\sigma$-subgaussian, with $R$ and $I$
real-valued, if $R$ and $I$ are $\sigma$-subgaussian (we make no
assumption on the independence or not of $R$ and $I$). We then have
\begin{equation}\label{eq-subg}
  \expect(|N|^k)\leq c'_k\sigma^{k}
\end{equation}
for some $c'_k>0$. Here one can for instance take $c'_4=256$. If
$N_1$, $N_2$ are $\sigma_i$-subgaussian and independent (real or
complex-valued), then $N_1+N_2$ is
$\sqrt{\sigma_1^2+\sigma_2^2}$-subgaussian.
\par
We will write $\|\varphi\|_{\infty}$ for the supremum norm of a
continuous function $\varphi$ on $[0,1]$; to avoid confusion, if $X$
is a random variable defined on a space $\Omega$, we will write
$\|X\|_{L^{\infty}(\Omega)}$ for the (essential) supremum of $X$.

\section{Proof of convergence of finite distributions}\label{sec-finite}

We begin by studying the random series $\kpath(t)$, since some of its
properties are relevant to the proof of Theorem~\ref{th-main}.

\begin{proposition}\label{pr-limit}
  For a fixed $t\in [0,1]$, the symmetric partial sums
$$
\kpath_m(t)=\sum_{|h|<m}\frac{e^{2i\pi ht}-1}{2i\pi h}\st_h
$$
of the random series defining $\kpath(t)$ converge in law, almost
surely, and in every space $L^q(\Omega)$ for $1\leq q<+\infty$, where
$\Omega$ is the probability space on which on the Sato-Tate variables
$\st_h$ are defined. In fact, we have
\begin{equation}\label{eq-linf}
\|\kpath_m(t)\|_{L^{\infty}(\Omega)}\ll (\log m),
\end{equation}
and
\begin{equation}\label{eq-lun}
\expect(|\kpath(t)-\kpath_m(t)|)\ll m^{-1/2},
\end{equation}
for $m\geq 1$, where the implied constants are absolute.
\par
For any $t\in [0,1]$, the Laplace transform
$$
\expect(e^{\lambda \Reel(\kpath(t))+\nu\Imag(\kpath(t))})
$$
is well-defined for all $(\lambda,\nu)$ non-negative. In particular,
$\kpath(t)$ has moments of all orders. 
\par
The process $(\kpath(t))_{t\in [0,1]}$ is almost surely continuous. It
is also almost surely nowhere differentiable. In fact, it is almost
surely H\"older continuous of all order $\alpha<1/2$ on $[0,1]$, and
almost surely nowhere H\"older continuous of order $1/2$.
\end{proposition}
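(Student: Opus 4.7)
The plan is to treat the assertions of the proposition sequentially, using that the variables $\st_h$ are independent, mean zero, bounded by $2$ (hence uniformly subgaussian), and of variance $1$ (a direct computation from the Sato--Tate density). Set $c_h(t)=(e^{2\pi iht}-1)/(2\pi ih)$ for $h\neq 0$ and $c_0(t)=t$, so that $|c_h(t)|\leq \min(1,1/(\pi|h|))$ and $\sum_h|c_h(t)|^{2}<\infty$.

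\textbf{Convergence, $L^{q}$-norms, and Laplace transform.} Orthogonality of the $\st_h$ in $L^{2}$ yields
$$
\expect(|\kpath(t)-\kpath_m(t)|^{2})=\sum_{|h|\geq m}|c_h(t)|^{2}\ll 1/m,
$$
from which \eqref{eq-lun} follows by Cauchy--Schwarz, and almost sure convergence follows from Kolmogorov's criterion for sums of independent summands with summable variance. The bound \eqref{eq-linf} reduces to the trivial triangle inequality $|\kpath_m(t)|\leq 2\sum_{|h|<m}|c_h(t)|\ll \log m$. Hoeffding's inequality applied to $\sum_h c_h(t)\st_h$ simultaneously produces
$$
\expect(e^{\lambda\Reel\kpath(t)+\nu\Imag\kpath(t)})\leq \exp\bigl(C(\lambda^{2}+\nu^{2})\textstyle\sum_h|c_h(t)|^{2}\bigr),
$$
giving finiteness of the Laplace transform, and the subgaussian tail bound $\|\kpath(t)-\kpath_m(t)\|_{L^{q}}\ll \sqrt{q/m}$, whence convergence in every $L^{q}(\Omega)$.

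\textbf{Continuity and H\"older regularity below $1/2$.} The same method applied to the increment, using $|c_h(t)-c_h(s)|\leq \min(|t-s|,1/(\pi|h|))$, gives $\expect|\kpath(t)-\kpath(s)|^{2}\ll |t-s|$ (the crossover in the sum at $|h|\asymp 1/|t-s|$), and then $\expect|\kpath(t)-\kpath(s)|^{q}\ll_q |t-s|^{q/2}$ from the subgaussian bound. Kolmogorov's continuity criterion then yields a modification which is almost surely H\"older of every order $\alpha<\tfrac{1}{2}-\tfrac{1}{q}$; letting $q\to\infty$ produces a version H\"older of every order $<1/2$, in particular almost surely continuous.

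\textbf{Nowhere differentiability and nowhere $1/2$-H\"older continuity.} This will be the main obstacle, and I would adapt the Paley--Wiener--Zygmund/Dvoretzky strategy developed for Brownian motion to the present non-Gaussian Fourier series. Two ingredients are needed: (a) the matching variance lower bound $\expect|\Delta_{n,k}|^{2}\asymp 2^{-n}$ on dyadic increments $\Delta_{n,k}=\kpath((k+1)2^{-n})-\kpath(k2^{-n})$, which falls out of the computation above together with a direct lower bound on the contribution of the low-frequency terms; and (b) an anti-concentration estimate $\proba(|\Delta_{n,k}|\leq \eps 2^{-n/2})\ll \eps$ uniform in $n,k,\eps$, which does \emph{not} follow from the variance alone because the $\st_h$ are not Gaussian. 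I would prove (b) by Fourier inversion, using that the characteristic function of the rescaled increment $2^{n/2}\Delta_{n,k}$ factors as a product of Sato--Tate characteristic functions evaluated at frequencies of order $O(1)$, producing a density bounded by an absolute constant. Granted (a) and (b), a first Borel--Cantelli argument across dyadic scales forces, with probability one, the occurrence of infinitely many scales $n$ at which some dyadic increment near any prescribed point exceeds $C\sqrt{n}\,2^{-n/2}$, ruling out $1/2$-H\"older behavior at every point and, a fortiori, differentiability.
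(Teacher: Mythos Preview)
Your first two paragraphs are correct and run parallel to the paper's proof: both arguments rest on the $\st_h$ being independent, centered, and bounded (hence uniformly subgaussian), and the paper simply packages the H\"older-regularity and Laplace-transform conclusions as citations to Kahane's monograph on random Fourier series rather than redoing the Kolmogorov-continuity computation you sketch. The quantitative bounds \eqref{eq-linf} and \eqref{eq-lun} are obtained in the paper exactly as you do.

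The gap is in your third paragraph. The Dvoretzky--Erd\H{o}s--Kakutani counting argument works at the differentiability threshold because the event $\{|\Delta_{n,k}|\leq C\cdot 2^{-n}\}$ has probability $O(2^{-n/2})$; requiring $m$ consecutive small increments then costs $O(2^{-mn/2})$, and the union bound over the $2^n$ dyadic positions still decays once $m\geq 3$. At the $\tfrac12$-H\"older threshold the relevant event is $\{|\Delta_{n,k}|\leq C\cdot 2^{-n/2}\}$, and the anti-concentration bound you propose yields only $\proba(\cdot)\leq p$ for some constant $p<1$ independent of $n$; then $2^n\cdot p^m$ diverges for every fixed $m$, and the argument collapses (already for Brownian motion). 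Your target size $C\sqrt{n}\,2^{-n/2}$ is exactly what a local law of the iterated logarithm predicts, but reaching it requires a matching \emph{lower} bound on the large-deviation probability $\proba(|\Delta_{n,k}|\geq C\sqrt{n}\,2^{-n/2})$, not a small-ball bound, together with enough decoupling between scales to run a second Borel--Cantelli or a direct construction; here the increments at different dyadic scales share all of the variables $\st_h$ and are genuinely dependent, so this step is nontrivial. None of this is delivered by the ingredients you list. The paper bypasses the issue entirely by invoking Kahane's theorem on the precise modulus of continuity of subgaussian random Fourier series (Kahane, \S 8.6, Th.~4), which is designed for exactly this situation and yields the nowhere-$\tfrac12$-H\"older statement directly.
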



\begin{proof}
  The convergence almost surely, hence in law, of the series for any
  fixed $t$ is an immediate consequence of Kolmogorov's $3$-series
  theorem, together with the fact that the Sato-Tate measure has mean
  $0$ and is compactly supported.
\par
The other results, however, are most easily derived as consequences of
general facts about random Fourier series, which we quote from the
work~\cite{kahane} of Kahane.
\par
We can write $\kpath(t)=t\st_0+4A(2\pi t)+4iB(2\pi t)$ where $A$ and $B$
are the random Fourier series
$$
A(t)=\sum_{h\geq 1}\frac{a_h}{\pi h}\cos(ht),\quad\quad
B(t)=\sum_{h\geq 1}\frac{b_h}{\pi h}\cos(ht-\pi/2),
$$
where
$$
a_h=\frac{\st_h-\st_{-h}}{4},\quad\quad b_h=\frac{\st_h+\st_{-h}}{4},
$$
(note that $(a_h)$ and $(b_h)$ are identically distributed since the
Sato-Tate law is symmetric).
\par
Both series are of the type considered in~\cite[Ch. 5, 7, 8]{kahane},
and especially, note that the random variables $a_h$ and $b_h$ are
$1$-subgaussian
(indeed, this is a property of any centered real random variable with
absolute value bounded by $1$).  The existence of the Laplace
transforms is then given by~\cite[\S 5.5, Th. 1]{kahane},
and we see from~\cite[\S 7.4, Th. 3]{kahane} that each of $A(t)$ and
$B(t)$ (hence also $\kpath(t)$) is $\alpha$-H\"older on $[0,1]$ if
$\alpha<1/2$.  Furthermore, it follows from~\cite[\S 8.6,
Th. 4]{kahane} that each of $A(t)$ and $B(t)$ is a.s. nowhere
$1/2$-H\"older-continuous.
\par
The bound~(\ref{eq-linf}) is clear since $|\st_h|\leq 2$. Then, for
any $q\geq 1$, the convergence of the partial sums $\kpath_m(t)$ in
$L^q(\Omega)$ follows from the convergence in $L^1(\Omega)$ implied
by~(\ref{eq-lun}). For the latter, it is enough to note that
$\kpath(t)-\kpath_m(t)$ is $\sigma_m$-subgaussian with
$$
\sigma_m^2=\sum_{|h|\geq m}\Bigl|\frac{e^{2i\pi ht}-1}{2i\pi
  h}\Bigr|^2\ll\frac{1}{m},
$$
together with the property~(\ref{eq-subg}) of subgaussian random
variables.
\end{proof}

\begin{remark}
It is interesting to contrast the result with the Fourier series
$$
W(t)=\sum_{h\in\Zz}{\frac{e^{2i\pi ht}-1}{2i\pi h}N_h}
$$
where $(N_h)_{h\in\Zz}$ is a sequence of independent standard complex
\emph{gaussian} random variables, i.e., $G_h=R_h+iI_h$ where $R_h$ and
$I_h$ are independent standard normal random variables. Then (this was
already known to Paley and Wiener) $W(t)$ is a standard complex
Brownian motion (see~\cite[Ch. 16, \S 3]{kahane}
. 
\end{remark}

We now begin the proof of the second part of
Theorem~\ref{th-main}. The argument extends immediately to the cases
considered in Theorems~\ref{th-main2} and~\ref{th-main3}, because the
main arithmetic property required is also valid then.  Thus we will
only comment briefly on this part of Theorems~\ref{th-main2}
and~\ref{th-main3} after the proof.
\par
Since Proposition~\ref{pr-limit} shows that the Laplace transforms of
the finite distributions of $(\kpath(t))$ exist, we can use the method
of moments to prove convergence in law of the finite
distributions. The next proposition therefore implies
Theorem~\ref{th-main} (2), and concludes the proof of that result.

\begin{proposition}\label{pr-finite}
  Let $k\geq 1$ be given, and $0\leq t_1<\cdots<t_k\leq 1$ be
  fixed. Fix also non-negative integers $(n_1,\ldots, n_k)$ and
  $(m_1,\ldots, m_k)$. Let
$$
M_p=\frac{1}{p-1}\sum_{a\in \Fpt}
\prod_{i=1}^kK_p(t_i,a)^{n_i}\overline{K_p(t_i,a)}^{m_i}.
$$
\par
We have
$$
M_p=\expect\Bigl(
\prod_{i=1}^k\kpath(t_i)^{n_i}\overline{\kpath(t_i)}^{m_i}
\Bigr)+O(p^{-1/2}(\log p)^{m+n})
$$
for $p\geq 2$, where $n=n_1+\cdots+n_k$, $m=m_1+\cdots+m_k$, and the
implied constant depends only on $m$ and $n$.
\end{proposition}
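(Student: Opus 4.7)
The strategy is to Fourier-expand each $K_p(t_i, a)$ as a linear combination of complete Kloosterman sums at translates of $a$, and then to apply Deligne's equidistribution theorem (specialized via Katz's monodromy computation for Kloosterman sheaves) to evaluate the average over $a\in \Fpt$. Setting $J_i = \lfloor t_i(p-1)\rfloor$ and using the discrete Fourier expansion of the indicator $\mathbf{1}_{[1,J_i]}$ modulo $p$, one obtains
\[
K_p(t_i, a) = \sum_{h \in \Fp} \gamma_{p,h}(t_i)\,\hypk_p(a-h) + O(p^{-1/2}),
\]
with $\gamma_{p,h}(t_i) = p^{-1}\sum_{y=1}^{J_i}\psi_p(hy)$, the remainder absorbing the piecewise-linear interpolation inside a single segment. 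Identifying $\Fp$ with integers in $[-(p-1)/2,(p-1)/2]$, one checks $|\gamma_{p,h}(t_i)|\ll \min(1,1/|h|)$ (hence $\sum_h|\gamma_{p,h}(t_i)|\ll \log p$) together with the pointwise comparison $|\gamma_{p,h}(t_i)-\alpha_h(t_i)|\ll \min(|h|/p, 1/|h|) + 1/p\ll p^{-1/2}$, where $\alpha_h(t) = (e^{2\pi i h t}-1)/(2\pi i h)$ (with $\alpha_0(t) = t$). One also has the standard bound $|K_p(t_i,a)|\ll \log p$ via completion plus Weil.

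Substituting the Fourier expansion into $M_p$, expanding the $(m+n)$-fold product, and averaging over $a$ (using the bound on $|K_p|$ to absorb cross-terms involving the interpolation error) yields
\[
M_p = \sum_{\vec h} \mathcal{C}(\vec h)\,\frac{1}{p-1}\sum_{a\in\Fpt}\prod_{\alpha}\hypk_p(a-h_\alpha) + O\bigl((\log p)^{m+n-1}p^{-1/2}\bigr),
\]
where $\vec h = (h_\alpha)_\alpha$ runs over $(m+n)$-tuples in $\Fp$, $\alpha$ indexes the slots of the product, and $\mathcal{C}(\vec h)$ is the corresponding product of factors $\gamma_{p,h_\alpha}(t_i)$ (for slots coming from $K_p$) and $\overline{\gamma_{p,h_\alpha}(t_i)}$ (for slots coming from $\overline{K_p}$). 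Grouping the entries of $\vec h$ by distinct value $h^{*}_1,\ldots, h^{*}_r\in\Fp$ with total multiplicities $M_1,\ldots, M_r$, Katz's theorem identifies the geometric monodromy group of $\bigoplus_{l=1}^{r}[+h^{*}_l]^*\hk{2}$ as $\SL_2^r$ when the $h^{*}_l$ are pairwise distinct, and Deligne's equidistribution theorem then yields
\[
\frac{1}{p-1}\sum_{a\in\Fpt}\prod_{l=1}^{r}\hypk_p(a-h^{*}_l)^{M_l} = \prod_{l=1}^{r}\expect(\st^{M_l}) + O_{m+n}(p^{-1/2}).
\]
Summing these errors against $|\mathcal{C}(\vec h)|$ and using $\sum_h|\gamma_{p,h}|\ll\log p$ bounds the total contribution by $O_{m+n}((\log p)^{m+n}p^{-1/2})$.

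It remains to match the surviving main term with $\mu:=\expect[\prod_i\kpath(t_i)^{n_i}\overline{\kpath(t_i)}^{m_i}]$. Expanding $\kpath(t_i)=\sum_{h\in\Zz}\alpha_h(t_i)\st_h$, using independence of the $\st_h$'s together with the vanishing of odd-order Sato-Tate moments, yields for $\mu$ a parallel expansion indexed by $\vec h\in\Zz^{m+n}$ with $\alpha_h$ in place of $\gamma_{p,h}$; this series is absolutely convergent because the Sato-Tate constraint forces each distinct $h$ to appear with total multiplicity $M_l\geq 2$, so that $\sum_{h\neq 0}|\alpha_h|^{M_l}\ll 1$. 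The same constraint permits a block-by-block comparison: the pointwise bound for $|\gamma_{p,h}-\alpha_h|$ implies $\sum_{h\in\Fp}|\gamma_{p,h}(t)^{M_l}-\alpha_h(t)^{M_l}|\ll_{M_l} p^{-1/2}$ for each $M_l\geq 2$; telescoping over blocks and bounding the $|h|>(p-1)/2$ tail of the $\Zz$-expansion by $\sum_{|h|>p/2}|\alpha_h|^2\ll 1/p$ gives $|M_p^{\mathrm{main}}-\mu|\ll_{m+n} p^{-1/2}$. Combining with the earlier errors completes the proof.

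The main obstacle is this last comparison: a naive $\ell^1$-bound on the coefficient discrepancy only gives $O(\log p)$, which would lose too many log factors when the moment degree is large. The key is to exploit that the Sato-Tate pairing makes every surviving block have multiplicity $M_l\geq 2$, converting the divergent $\sum_h|h|^{-1}$ into the convergent $\sum_h|h|^{-M_l}$; combined with the uniform smallness $|\gamma_{p,h}-\alpha_h|\ll p^{-1/2}$, this keeps the comparison error below the $(\log p)^{m+n}p^{-1/2}$ coming from the Deligne step.
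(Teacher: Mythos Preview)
Your argument is correct and follows the paper's route through the Fourier expansion and the Deligne--Katz input (your grouped-by-distinct-value formulation of the average over $a$ is exactly the paper's Lemma~\ref{lm-katz}). The one substantive difference is in the final comparison of the main term with $\mu$. The paper recognizes the main term as $\expect\bigl(\prod_i X_p(t_i)^{n_i}\overline{X_p(t_i)}^{m_i}\bigr)$ for the auxiliary random variable $X_p(t)=\sum_{|h|<p/2}\gamma_{p,h}(t)\,\st_h$, then passes to the partial sum $\kpath_p$ of $\kpath$ via the $L^2$-bound $\expect|X_p(t)-\kpath_p(t)|^2=\sum_h|\gamma_{p,h}-\alpha_h|^2\ll p^{-1}$ combined with the uniform $L^\infty$-bounds $\ll\log p$, and finally uses~(\ref{eq-linf}) and~(\ref{eq-lun}) to reach $\kpath$. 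You instead expand both moments combinatorially and compare block by block, exploiting that the vanishing of odd Sato--Tate moments forces every surviving block to have size at least $2$, which turns the divergent $\sum_h|h|^{-1}$ into the convergent $\sum_h|h|^{-M_l}$. Both routes work; yours is more hands-on but has the pleasant feature that the cruder Riemann-sum bound $|\gamma_{p,h}-\alpha_h|\ll\min(|h|/p,1/|h|)$ already suffices, whereas the paper's $L^2$ step genuinely needs the sharper uniform bound $\ll 1/p$ (obtained there by a careful three-term decomposition). One small point to make explicit: in your ``telescoping over blocks'', after taking absolute values you may simply drop the distinctness constraint among the $h^*_l$, since each remaining block factor $\sum_h|h|^{-M_l}$ with $M_l\geq 2$ is $O(1)$; this makes the telescope factor cleanly into a product.
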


Since the notation may obscure the essential arithmetical point, the
reader is encouraged to first read through the proof under the
assumption that $k=1$.
\par
We fix once and for all the sequence $(\st_h)_{h\in\Zz}$ of
independent Sato-Tate random variables used to define the process
$\kpath(t)$.

\begin{proof}
First of all, we deal with the linear interpolation involved in the
definition of $K_p(t)$. Let
$$
\tkp_p(t,a)=\frac{1}{\sqrt{p}}\sum_{1\leq x\leq
  (p-1)t}\psi_p(ax+\bar{x})
$$
for $p$ prime, $a\in\Fpt$ and $t\in [0,1]$. This is a discontinuous
function of $t$, and we have
\begin{equation}\label{eq-desmooth}
|K_p(t,a)-\tkp_p(t,a)|\leq \frac{1}{\sqrt{p}}
\end{equation}
for all $p$, $a$ and $t$. Moreover, from the discrete Plancherel
formula (i.e., the completion method), we have
\begin{equation}\label{eq-plancherel}
  \tkp_p(t,a)=\frac{1}{\sqrt{p}}\sum_{|h|<p/2}\alpha_p(h;t)\hypk_p(a-h)
\end{equation}
for any $a\in\Fpt$ and $t\in[0,1]$, where
$$
\alpha_p(h;t)=\frac{1}{\sqrt{p}}\sum_{1\leq x\leq (p-1)t} \psi_p(hx)
$$
are the discrete Fourier coefficients of the characteristic function
of the interval $1\leq x\leq (p-1)t$ modulo $p$. It is well-known that
\begin{equation}\label{eq-completion}
\sum_{|h|<p/2}|\alpha_p(h;t)|\leq \sqrt{p}(\log 3p),
\end{equation}
for all $t\in [0,1]$, so that in particular we have
\begin{equation}\label{eq-complete}
|\tkp_p(t,a)|\leq 2(\log 3p)
\end{equation}
for all $p$, $t$ and $a$, by Weil's bound for Kloosterman sums.
\par
We deduce from this that
$$
\Bigl|\prod_{i=1}^kK_p(t_i,a)^{n_i}\overline{K_p(t_i,a)}^{m_i}-
\prod_{i=1}^k\tkp_p(t_i,a)^{n_i}\overline{\tkp_p(t_i,a)}^{m_i}\Bigr|
\ll p^{-1/2}(\log p)^{m+n},
$$
where
the implied constant depends only on $m$ and $n$. Hence it is
enough to prove the moment estimate for
$$
\tilde{M}_p=\frac{1}{p-1}\sum_{a\in \Fpt}
\prod_{i=1}^k\tkp_p(t_i,a)^{n_i}\overline{\tkp_p(t_i,a)}^{m_i}.
$$
\par
We compute $\tilde{M}_p$ by replacing each $\tkp_p(t_i,a)$ and its
conjugate by the formula~(\ref{eq-plancherel}) and taking the $n_i$ or
$m_i$-th power.  We obtain
\begin{multline*}
  \tilde{M}_p=\frac{1}{p^{(m+n)/2}}\frac{1}{p-1}\sum_{a\in \Fpt}
  \multsum_{\uple{h}_1,\ldots,\uple{h}_k}\alpha_p(\uple{h}_1;t_1)\cdots
  \alpha_p(\uple{h}_k;t_k)\\
  \times \prod_{l=1}^{n_1+m_1}\hypk_p(a-h_{1,l})\cdots
  \prod_{l=1}^{n_k+m_k}\hypk_p(a-h_{k,l}),
\end{multline*}
where each
$$
\uple{h}_j=(h_{j,1},\ldots, h_{j,n_j},h_{j,n_j+1},\ldots,
h_{j,n_j+m_j})
$$
ranges over all $(n_j+m_j)$-tuples of integers $h_{j,l}$ in
$]-p/2,p/2[$ and
$$
 \alpha_p(\uple{h}_j;t_j)=
\prod_{l=1}^{n_j}\alpha_p(h_{j,l};t_j)
\prod_{l=1}^{m_j}\overline{\alpha_p(h_{j,n_j+l};t_j)}.
$$
\par
Exchanging the order of the sums, we deduce
$$
\tilde{M}_p=\frac{1}{p^{(m+n)/2}}\multsum_{\uple{h}_1,\ldots,
  \uple{h}_k} \prod_{1\leq j\leq
  k}\alpha_p(\uple{h}_j;t_j)S(\uple{h}_1,\cdots,\uple{h}_k;p)
$$
with
$$
S(\uple{h}_1,\ldots,\uple{h}_k;p) =\frac{1}{p-1} \sum_{a\in\Fpt}
\prod_{l=1}^{n_1+m_1}\hypk_p(a-h_{1,l})\cdots
\prod_{l=1}^{n_k+m_k}\hypk_p(a-h_{k,l}).
$$
\par
The sums $S(\uple{h}_1,\ldots,\uple{h}_k;p)$ are complete sums of
products of Kloosterman sums. The crucial point, which we explain
below in Lemma~\ref{lm-katz}, is that from Deligne's Riemann
Hypothesis over finite fields, the computation of the geometric
monodromy group of the Kloosterman sheaf of rank $2$ by
Katz~\cite{katz-gkm}, and the Goursat-Kolchin-Ribet criterion~\cite[\S
1.8]{katz-esde}, we can derive the estimate
$$
S(\uple{h}_1,\ldots,\uple{h}_k;p)=
\expect\Bigl(\prod_{l=1}^{n_1+m_1}\st_{h_{1,l}} \cdots
\prod_{l=1}^{n_k+m_k}\st_{h_{k,l}}\Bigr)+O(p^{-1/2})
$$
where the implied constant depends only on $m$ and $n$.
\par
By~(\ref{eq-completion}), the contribution $E_p$ of the error terms to
$\tilde{M}_p$ is bounded by
$$
E_p \ll p^{-1/2}\frac{1}{p^{(m+n)/2}}p^{(m+n)/2}(\log 3p)^{m+n} \ll
p^{-1/2}(\log 3p)^{m+n}
$$
where the implied constant depends only on $m$ and $n$.
\par
On the other hand, by reverting the computation, we get
$$
\tilde{M}_p=
\expect\Bigl( \prod_{i=1}^kX_p(t_i)^{n_i}\overline{X_p(t_i)}^{m_i}
\Bigr)+
O(p^{-1/2}(\log p)^{m+n}).
$$
where the random variables $X_p(t)$ are given by
\begin{equation}\label{eq-xp}
X_p(t)=\sum_{|h|<p/2} \frac{\alpha_p(h;t)}{\sqrt{p}}\st_h.
\end{equation}
\par
We now denote
$$
\beta(h;t)=\frac{e^{2i\pi ht}-1}{2i\pi h}
$$
for $h\in\Zz$, with $\beta(0;t)=t$, and we consider the partial sums
$$
\kpath_p(t)=\sum_{|h|<p/2} \beta(h;t)\st_h
$$
of $\kpath(t)$. From~(\ref{eq-linf}) and~(\ref{eq-lun}) in
Proposition~\ref{pr-limit}, we see that
$$
\expect\Bigl(
\prod_{i=1}^k\kpath_p(t_i)^{n_i}\overline{\kpath_p(t_i)}^{m_i}
\Bigr)=\expect\Bigl(
\prod_{i=1}^k\kpath(t_i)^{n_i}\overline{\kpath(t_i)}^{m_i} \Bigr)+
O(p^{-1/2}(\log p)^{m+n}),
$$
where the implied constant depends only on $m$ and $n$.
\par
It is therefore enough to prove that
$$
\expect\Bigl( \prod_{i=1}^kX_p(t_i)^{n_i}\overline{X_p(t_i)}^{m_i}
\Bigr)= \expect\Bigl(
\prod_{i=1}^k\kpath_p(t_i)^{n_i}\overline{\kpath_p(t_i)}^{m_i} \Bigr)
+O(p^{-1/2}(\log p)^{m+n})
$$
in order to finish the proof of the proposition.
\par
In view of the bound~(\ref{eq-linf}) and the analogue
$$
\|X_p(t)\|_{L^{\infty}(\Omega)}\ll (\log p)
$$
where the implied constant is absolute, 
it suffices to prove that
$$
\expect(|X_p(t)-\kpath_p(t)|^2)\ll p^{-1}.
$$
\par
But since the random variables $\st_h$ are independent with
$\expect(\st_h)=0$ and $\expect(|\st_h|^2)=1$, we have
$$
\expect(|X_p(t)-\kpath_p(t)|^2)= \sum_{|h|<p/2}
\Bigl|\frac{\alpha_p(h;t)}{\sqrt{p}}-\beta(h;t)\Bigr|^2.
$$
\par
By definition and summing a geometric sum, we get
$$
\frac{\alpha_p(h;t)}{\sqrt{p}}=\frac{1}{p}\sum_{1\leq x\leq
  (p-1)t}\psi_p(hx) = \frac{\psi_p(h)}{p}\frac{1-\psi_p(h\lfloor
  (p-1)t\rfloor)}{1-\psi_p(h)},
$$
with the convention that 
$$
\frac{\alpha_p(0;t)}{\sqrt{p}}= \frac{\lfloor (p-1)t\rfloor }{p}.
$$
\par
For $h=0$, we therefore find that
$$
\frac{\alpha_p(0;t)}{\sqrt{p}}-\beta(0;t)= \frac{\lfloor
  (p-1)t\rfloor}{p}-t\ll \frac{1}{p}
$$
for all $t\in [0,1]$ and $p\geq 3$.
\par
For all $h$ such that $1\leq |h|<p/2$, we can write for instance
\begin{multline*}
  \frac{\alpha_p(h;t)}{\sqrt{p}}-\beta(h;t) = \frac{(\psi_p(\lfloor
    (p-1)t\rfloor h)-1)-(e(ht)-1)}{2i\pi h} \\-\Bigl(\psi_p(\lfloor
  (p-1)t\rfloor h)-1\Bigr)\Bigl(\frac{1}{2i\pi
    h}-\frac{1}{p(\psi_p(h)-1)}\Bigr)\\+(\psi_p(h)-1)\frac{\psi_p(\lfloor
    (p-1)t\rfloor h)-1}{p(\psi_p(h)-1)},
\end{multline*}
and then simple bounds for the three terms show that we also have
$$
\frac{\alpha_p(h;t)}{\sqrt{p}}-\beta(h;t) \ll \frac{1}{p}
$$
uniformly for all $t\in [0,1]$ and all $p$.  
\par
Squaring and summing over $h$, it follows therefore that
$$
\expect(|X_p(t)-\kpath_p(t)|^2)\ll p^{-1},
$$
which gives  the desired bound and finishes the proof.
\end{proof}

Here is the crucial arithmetic lemma that we used:

\begin{lemma}
\label{lm-katz}
With notation as in the proof, we have
$$
S(\uple{h}_1,\ldots,\uple{h}_k;p)=
\expect\Bigl(\prod_{l=1}^{n_1+m_1}Z_{h_{1,l}} \cdots
\prod_{l=1}^{n_k+m_k}Z_{h_{k,l}}\Bigr)+O(p^{-1/2})
$$
where the $(Z_h)_{h\in \Fp}$ are independent random variables with
Sato-Tate distributions, and the implied constant depends only on $m$
and $n$.
\end{lemma}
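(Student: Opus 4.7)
The plan is to interpret the sum $S(\uple{h}_1,\ldots,\uple{h}_k;p)$ as an average of trace functions of an $\ell$-adic sheaf on $\Gm/\Fp$, and then apply Deligne's equidistribution theorem in the form refined by Katz. Let $g_1,\ldots,g_r$ be the distinct values appearing among the $h_{j,l}$ (so $r\leq n+m$), with total multiplicities $e_1,\ldots,e_r$ obtained by combining the $n_i$- and $m_i$-th power contributions (this uses that $\hypk_p(a-h)$ is real-valued, so conjugation is trivial). Write $\HYPK_2$ for Katz's lisse, pure of weight $0$, rank-$2$ Kloosterman sheaf on $\Gm/\Fp$, so that $\hypk_p(a-g) = -\Tr(\frob_a \mid [+g]^*\HYPK_2)$ whenever $a\neq g$.

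First I would rewrite
\[
S(\uple{h}_1,\ldots,\uple{h}_k;p) = \frac{(-1)^{m+n}}{p-1}\sum_{a \in \Fpt} \Tr\Bigl(\frob_a \,\Big|\, \bigotimes_{i=1}^r ([+g_i]^*\HYPK_2)^{\otimes e_i}\Bigr) + O(p^{-1}),
\]
the error absorbing the $O(r)$ values of $a$ where some translate fails to be lisse. The arithmetic heart is then to identify the geometric monodromy group of $\bigoplus_{i=1}^r [+g_i]^*\HYPK_2$: each summand has geometric monodromy $\SL_2$ by \cite{katz-gkm}, and the Goursat-Kolchin-Ribet criterion \cite[\S 1.8]{katz-esde} reduces the identification of the combined monodromy with the full product $\SL_2^r$ to the assertion that distinct additive translates of $\HYPK_2$ are pairwise geometrically non-isomorphic (up to rank-one twists), which in turn follows from Katz's analysis of the local ramification of $\HYPK_2$ at $0$ and $\infty$.

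Granting the monodromy identification, Deligne's equidistribution theorem -- concretely, the Grothendieck-Lefschetz trace formula and the Riemann Hypothesis applied to the tensor sheaf $\sheaf{G} = \bigotimes_i ([+g_i]^*\HYPK_2)^{\otimes e_i}$ -- yields
\[
\frac{1}{p-1}\sum_{a\in\Fpt}\Tr(\frob_a \mid \sheaf{G}) = \int_{\SU(2)^r}\prod_{i=1}^r \Tr(u_i)^{e_i}\,d\mu_{\mathrm{Haar}}(u) + O(p^{-1/2}),
\]
with implied constant depending only on the rank and conductor of $\sheaf{G}$, both bounded in terms of $m+n$ alone. Since the Haar measure on $\SU(2)^r$ is a product pushing forward under the trace to the Sato-Tate measure, the integral factors as $\prod_i \expect(Z_{g_i}^{e_i})$, and independence of the $Z_h$ then gives the claimed expectation; the overall sign $(-1)^{m+n}$ is absorbed by the symmetry $Z_h \sim -Z_h$ of the Sato-Tate law.

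The main obstacle is the Goursat-Kolchin-Ribet input, i.e., verifying that distinct additive translates of $\HYPK_2$ are pairwise geometrically non-isomorphic modulo rank-one twists. This is the precise form of the \emph{independence of Kloosterman sheaves} alluded to in the abstract and is the deep algebraic-geometric content of the result. Once it is in hand, the sheaf-theoretic rewriting, the trace formula, and the quantitative Deligne bound with explicit conductor dependence are all routine.
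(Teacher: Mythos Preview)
Your proposal is correct and follows essentially the same route as the paper: collect the shifts by multiplicity, invoke Katz's computation of the monodromy of $\HYPK_2$ as $\SL_2$, apply Goursat--Kolchin--Ribet to get the product monodromy $\SL_2^r$, and conclude via Deligne's equidistribution. The paper packages the endgame by citing \cite[Cor.~3.3]{sums-of-products}, but the content is what you wrote.

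One remark on emphasis: you describe the Goursat input---that distinct additive translates of $\HYPK_2$ are not geometrically isomorphic up to rank-one twist---as ``the main obstacle'' and ``the deep algebraic-geometric content.'' In the paper this step is dispatched in one line: if $[+\tau]^*\HYPK_2 \simeq \HYPK_2\otimes\sheaf{L}$ with $\tau\neq 0$, then the left side is unramified at $0$ while the right side is ramified there, a contradiction. So once the ramification of $\HYPK_2$ (lisse on $\Gm$, tame at $0$, wild at $\infty$) is known, this verification is essentially immediate. The genuinely deep inputs are the Riemann Hypothesis itself and Katz's monodromy computation, not the Goursat check.
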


\begin{proof}
We can write
$$
S(\uple{h}_1,\ldots,\uple{h}_k;p)=
\frac{1}{p-1}\sum_{a\in\Fpt}
\prod_{\tau\in \Fp}\hypk_p(a+\tau)^{\mu(\tau)}
$$
where
$$
\mu(\tau)=\sum_{j=1}^k|\{1\leq l\leq n_j+m_j\,\mid\,
h_{j,l}=\tau\mods{p}\}|,
$$
for any $\tau\in\Fp$, is the multiplicity of the factor
$\hypk_p(a+\tau)$ among the shifted Kloosterman sums in
$S(\uple{h}_1,\ldots,\uple{h}_k;p)$.
\par
This type of sums of products can be estimated by the Riemann
Hypothesis over finite fields, as explained in detail
in~\cite{sums-of-products}. More precisely, Katz
showed~\cite[Th. 11.1]{katz-gkm} that the geometric and arithmetic
monodromy groups of the Kloosterman sheaf $\sheaf{F}=\HYPK_2$ are
equal and isomorphic to $\SL_2$. Furthermore, if $\tau\not=0$ there
does not exist a rank $1$ sheaf $\sheaf{L}$ such that
$$
[+\tau]^*\HYPK_2\simeq \HYPK_2\otimes\sheaf{L}
$$
(most simply seen here because the left-hand side is unramified at
$0$, while the right-hand side is ramified). Using the
Goursat-Kolchin-Ribet criterion~\cite[\S 1.8]{katz-esde}, it follows
that the geometric and arithmetic monodromy groups of
$$
\bigoplus_{\tau\in\Fp}[+\tau]^*\sheaf{F}
$$ 
are equal to $\SL_2\times\cdots\times \SL_2$. The Riemann Hypothesis
then gives the asymptotic formula
$$
S(\uple{h}_1,\ldots,\uple{h}_k;p)= \prod_{\tau\in\Fp}A(\mu(\tau))
+O(p^{-1/2})
$$
where $A(\mu)$, for any integer $\mu\geq 0$, denotes the multiplicity
of the trivial representation of $\SU_2$ in the $\mu$-th tensor power
of its standard $2$-dimensional representation, and the implied
constant depends only on
$$
\sum_{\tau\in\Fp}{\mu(\tau)}=\sum_{i}(n_i+m_i)=m+n
$$
(see also~\cite[Cor. 3.3]{sums-of-products} for this statement).
\par
However, we have by character theory
$$
A(\mu)=\expect(\st^{\mu})
$$
for any Sato-Tate distributed random-variable $\st$ and $\mu\geq
0$. Thus by reversing the computation, we see that
$$
\prod_{\tau\in\Fp}A(\mu(\tau))=
\expect\Bigl(\prod_{l=1}^{n_1+m_1}Z_{h_{1,l}} \cdots
\prod_{l=1}^{n_k+m_k}Z_{h_{k,l}}\Bigr)
$$
where the $Z_{h}$ are independent and Sato-Tate distributed.
\end{proof}

\begin{remark}
(1) We emphasize once again that, for our application, it is essential
to obtain the correct main term, and not only a criterion for
cancellation in these sums.  This contrasts with many other
applications of such estimates.
\par
(2) Interestingly, similar sums of products of shifted Kloosterman
sums also occurred in a recent work of Irving~\cite{irving} concerning
the divisor function in arithmetic progressions to smooth moduli;
there, however, only the cancellation criterion was required.
\end{remark}

We can now see why convergence of finite distributions also holds in
the case considered in Theorem~\ref{th-main2}. We have then
$$
\frac{1}{\sqrt{p}}\sum_{0\leq x\leq pt}\psi_p(ax+x^3)
=\frac{1}{\sqrt{p}}\sum_{|h|<p/2}\alpha'_p(h;t)\birch_p(a-h)
$$
exactly as in~(\ref{eq-plancherel}), where $\alpha'_p(h;t)$ are the
discrete Fourier coefficients of the interval $0\leq n\leq pt$ modulo
$p$ and
$$
\birch_p(a)=\frac{1}{\sqrt{p}}\sum_{x\in\Fp}{\psi_p(ax+x^3)}
$$
are the Birch sums. Since Katz also showed that the geometric and
arithmetic monodromy groups of the lisse sheaf $\sheaf{G}$ on
$\Aa^1_{\Fp}$ parameterizing these sums (namely, the sheaf-theoretic
Fourier transform of the Artin-Schreier sheaf
$\sheaf{L}_{\psi_p(x^3)}$) are both equal to $\SL_2$ for $p>7$
(see~\cite[Th. 19, Cor. 20]{katz-mg}), the proof of
Proposition~\ref{pr-finite} applies essentially verbatim to give
convergence of finite distributions. In checking the analogue of
Lemma~\ref{lm-katz}, one has to check that the sheaf $\sheaf{G}$ is
such that there is no geometric isomorphism
$$
[+\tau]^*\sheaf{G}\simeq \sheaf{G}\otimes\sheaf{L}
$$
where $\sheaf{L}$ is of rank $1$ and $\tau\not=0$. But indeed, such a
sheaf $\sheaf{L}$ would need to be lisse on $\Aa^1$ (since $\sheaf{G}$
is), and therefore trivial. Then the condition
$$
[+\tau]^*\sheaf{G}\simeq \sheaf{G}
$$
would imply (by taking Fourier transform) that
$\sheaf{L}_{\psi(X^3)}\simeq \sheaf{L}_{\psi(X^3+\tau X)}$, which is
not the case for $\tau\not=0$.
\par
For the process of Theorem~\ref{th-main3}, proving convergence in
finite distributions is only a matter of checking that the convergence
in finite distribution for Kloosterman sums holds for any choice of
non-trivial additive character modulo $p$, instead of $\psi_p$, and
this is immediate.


\section{Proof of tightness}\label{sec-tight}

Now we consider tightness to finish the proof of
Theorem~\ref{th-main2}. More generally, we consider a sequence of
processes $(K^{\mathcal{X}}_p(t))_{t\in [0,1]}$ constructed as
described before Theorem~\ref{th-main2}, with summands
$\xi_p(x,\omega)$ defined either for $x\in \Fp$ or $x\in\Fpt$.
\par
We will use Kolmogorov's criterion to find a condition that implies
tightness:

\begin{proposition}[Kolmogorov tightness criterion]
Let $(L_p(t))_{t\in [0,1]}$ be a sequence of $C([0,1])$-valued
processes such that $L_p(0)=0$ for all $p$. 
\par
If there exist constants $\alpha>0$, $\delta>0$ and $C\geq 0$, such
that for any $p$ and any $s<t$ in $[0,1]$, we have
\begin{equation}\label{eq-kol}
  \expect(|L_p(t)-L_p(s)|^{\alpha})\leq
  C|t-s|^{1+\delta}
\end{equation}
then the sequence $(L_p(t))$ is tight.
\end{proposition}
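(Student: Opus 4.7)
The plan is to reduce to a uniform modulus-of-continuity estimate via the standard Arzelà--Ascoli/Prokhorov characterization of tightness in $C([0,1])$ (see~\cite[Th. 7.3]{billingsley}): since $L_p(0)=0$ is trivially tight, it suffices to show that for every $\eps,\eta>0$ there exists $\rho_0>0$ such that
$$
\proba\Bigl(\sup_{|s-t|\leq \rho_0}|L_p(t)-L_p(s)|>\eps\Bigr)\leq \eta
$$
uniformly in $p$. This is exactly the role that~(\ref{eq-kol}) will play, through a dyadic chaining argument.

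Fix an exponent $\gamma\in(0,\delta/\alpha)$. For each integer $n\geq 0$ and $0\leq k<2^n$, Markov's inequality applied to~(\ref{eq-kol}) at $(s,t)=(k/2^n,(k+1)/2^n)$ yields
$$
\proba\bigl(|L_p((k+1)/2^n)-L_p(k/2^n)|>2^{-n\gamma}\bigr)\leq C\,2^{-n(1+\delta-\alpha\gamma)}.
$$
Union-bounding over the $2^n$ such increments and then summing over $n\geq N$, the event $B_N^p$ that \emph{any} dyadic increment at scale $n\geq N$ exceeds $2^{-n\gamma}$ satisfies
$$
\proba(B_N^p)\ll 2^{-N(\delta-\alpha\gamma)},
$$
with an absolute implied constant depending only on $C,\alpha,\delta,\gamma$. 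Crucially, this bound is uniform in $p$.

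Off the event $B_N^p$, I would invoke the standard chaining trick: any two dyadic rationals $s<t$ in $[0,1]$ with $|t-s|\leq 2^{-N}$ can be connected by a chain of at most two dyadic endpoints at each scale $n\geq N$, so that $|L_p(t)-L_p(s)|\leq 2\sum_{n\geq N}2^{-n\gamma}\ll|t-s|^\gamma$. Since $L_p$ is continuous and the dyadic rationals are dense in $[0,1]$, the bound extends to all $s,t$. Choosing $\rho_0=2^{-N}$ with $N$ large enough so that $C'\rho_0^\gamma<\eps$ and $\proba(B_N^p)<\eta$, I obtain the required uniform modulus-of-continuity estimate.

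The proof is essentially classical and the only piece of care needed is the dyadic chaining bookkeeping; nothing arithmetic enters here, all the arithmetic input is used later to verify that~(\ref{eq-kol}) holds for the specific $L_p$ of interest. The mild subtlety, should one want an effective version, is the tradeoff in the choice of $\gamma$: larger $\gamma$ gives stronger Hölder regularity on the chain but smaller margin $\delta-\alpha\gamma$ in the probability estimate.
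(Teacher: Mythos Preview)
Your argument is the standard Kolmogorov--Chentsov chaining proof and is correct; the only imprecision is the claim $2\sum_{n\geq N}2^{-n\gamma}\ll|t-s|^{\gamma}$, which as written would require $|t-s|\asymp 2^{-N}$, but you only need the weaker bound $\ll 2^{-N\gamma}=\rho_0^{\gamma}$ for the modulus-of-continuity estimate, and that is what you actually use in the next sentence.

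The paper, however, does not prove this proposition at all: immediately after the statement it simply records ``This is found in, e.g.,~\cite[Th. XIII.1.8]{revuz-yor}'' and moves on. So there is nothing to compare against; you have supplied a complete (and classical) proof where the authors chose to cite a reference. Your closing remark that the arithmetic input enters only later, in verifying~(\ref{eq-kol}) for the specific processes, is exactly the point of view the paper takes in Lemma~\ref{lm-tight}.
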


This is found in, e.g.,~\cite[Th. XIII.1.8]{revuz-yor}.
We then obtain the following criterion for paths of exponential sums: 

\begin{lemma}[Tightness and short sums]\label{lm-tight}
  Assume that $\mathcal{X}=(\xi_p(x))_{x\in\Fp}$ is defined on
  $\Omega_p$, a finite set with uniform probability measure, and
  satisfies the following conditions:
\par
\emph{(1)} There exists $H\geq 1$ such that we have
$$
|\xi_p(x,\omega)|\leq 1,\quad\quad |\hat{\xi}_p(h,\omega)|\leq H
$$
for all primes $p$, $x\in \Fp$, $h\in\Fp$ and $\omega\in\Omega_p$,
where
$$
\hat{\xi}_p(\omega,h)=
\frac{1}{\sqrt{p}}
\sum_{x\in\Fp}\xi_p(x,\omega)\psi_p(hx)
$$
is the discrete Fourier transform of $x\mapsto \xi_p(x,\omega)$.
\par
\emph{(2)} We have
$$
\frac{1}{|\Omega_p|} \sum_{\omega\in\Omega_p} \hat{\xi}_p(h_1,\omega)
\hat{\xi}_p(h_2,\omega) \overline{\hat{\xi}_p(h_3,\omega)
  \hat{\xi}_p(h_4,\omega)} =\expect(\st_{h_1}\cdots
\st_{h_4})+O(p^{-1/2})
$$
for all primes $p$ and $(h_1,\ldots,h_4)\in\Ff_p^4$.
\par
\emph{(3)} There exist $\alpha>0$, $\delta_1>0$ and $\delta_2>0$ such
that, for any prime $p$, any interval $I\subset \Fpt$ of length
$$
p^{1/2-\delta_1}\leq |I|\leq p^{1/2+\delta_1},
$$
we have
\begin{equation}\label{eq-short}
  \frac{1}{|\Omega_p|}\sum_{\omega\in\Omega_p}
  \Bigl|\frac{1}{\sqrt{p}}\sum_{x\in I}\xi_p(x,\omega)\Bigr|^{\alpha} \ll
  p^{-1/2-\delta_2}.
\end{equation}
\par
Then the sequence $(K_p^{\mathcal{X}}(t))_{t\in [0,1]}$ is tight as
$C([0,1])$-valued random variables. Moreover, the same holds if the
summands $\mathcal{X}=(\xi_p(x))_{x\in\Fpt}$ are parameterized by
$\Fpt$ instead of $\Fp$.
\end{lemma}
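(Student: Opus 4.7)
My plan is to verify the hypothesis of the Kolmogorov tightness criterion stated just above. Concretely, I aim to find exponents $\alpha, \delta > 0$ (independent of $p$) with
\[
\expect\bigl(\lvert K_p^{\mathcal{X}}(t) - K_p^{\mathcal{X}}(s)\rvert^{\alpha}\bigr) \leq C\lvert t-s\rvert^{1+\delta}
\]
for all $0 \leq s \leq t \leq 1$. As a preliminary reduction, the polygonal interpolation contributes only an $O(1/\sqrt{p})$ error at each endpoint (using $\lvert\xi_p\rvert\leq 1$ from hypothesis~(1)), so it is enough to treat integer endpoints $s = j/p$, $t = k/p$, for which the increment equals $p^{-1/2}\sum_{x\in I}\xi_p(x,\omega)$ over an integer interval $I$ of length $\lvert I\rvert\asymp p\lvert t-s\rvert$. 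Applying the discrete Plancherel identity to the indicator of $I$ gives the completion formula
\[
\tfrac{1}{\sqrt{p}}\sum_{x\in I}\xi_p(x,\omega) = \tfrac{1}{p}\sum_{|h|<p/2}\hat{\xi}_p(h,\omega)S_I(-h),\quad S_I(h)=\sum_{x\in I}\psi_p(hx),
\]
with the standard bounds $\lvert S_I(h)\rvert\leq \min(\lvert I\rvert, p/\lvert h\rvert)$ and Parseval $\sum_h\lvert S_I(h)\rvert^2 = p\lvert I\rvert$; this puts hypothesis~(2) in play.

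The core of the argument is a case split on $\lvert I\rvert$. For very short sums ($\lvert I\rvert\leq 1$), the polygonal bound $\lvert K_p^{\mathcal{X}}(t)-K_p^{\mathcal{X}}(s)\rvert\leq \sqrt{p}\lvert t-s\rvert$ (which uses only hypothesis~(1)) yields an $\alpha$-th moment of size $p^{\alpha/2}\lvert t-s\rvert^{\alpha}$; since necessarily $\lvert t-s\rvert\leq 1/p$ in this case, this is absorbed into $C\lvert t-s\rvert^{1+\delta}$ for any $\delta<1$. For intermediate sums with $p^{1/2-\delta_1}\leq\lvert I\rvert\leq p^{1/2+\delta_1}$, hypothesis~(3) applies and gives $\expect(|\cdot|^\alpha)\ll p^{-1/2-\delta_2}$, which compared with $\lvert t-s\rvert^{1+\delta}$ is acceptable provided $\delta$ is chosen small enough in terms of $\delta_1$ and $\delta_2$. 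For long sums ($\lvert I\rvert\geq p^{1/2+\delta_1}$), I would raise to the fourth power, expand via the completion formula, and apply hypothesis~(2) to reduce the inner expectation to a Sato--Tate joint moment plus $O(p^{-1/2})$; the Wick-type pairings of $(h_1,h_2,h_3,h_4)$ combined with Parseval produce the main term of size $\lvert I\rvert^{2}/p^{2}=\lvert t-s\rvert^{2}$, acceptable with $\delta=1$, while the $O(p^{-1/2})$ error is absorbed because $\lvert I\rvert$ is long. A Jensen/Hölder interpolation then transfers this fourth-moment bound to the $\alpha$-th moment bound, provided $\alpha\leq 4$.

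The main obstacle I anticipate is coherently fitting a single pair $(\alpha,\delta)$ across all three regimes at once, especially at their boundaries: the relevant constraint from the intermediate regime is roughly $\delta\leq(\delta_2-\delta_1)/(\tfrac12+\delta_1)$, which may force shrinking the effective $\delta_1$. Minkowski's inequality, by splitting intervals of awkward length into sub-intervals lying in the range of hypothesis~(3), should bridge any residual gap. Controlling the logarithmic factors from $\sum_h\lvert S_I(h)\rvert\ll p\log p$, and checking that the error term in hypothesis~(2) stays below the Parseval main term in the long regime, are the remaining routine but delicate bookkeeping steps.
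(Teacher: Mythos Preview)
Your overall strategy matches the paper's (Kolmogorov's criterion plus a case split on $|I|$, with the fourth moment handled via completion and hypothesis~(2) for long intervals and hypothesis~(3) for intermediate ones). However, there is a genuine gap: the range $1<|I|<p^{1/2-\delta_1}$ (equivalently $p^{-1}<|t-s|<p^{-1/2-\delta_1}$) is not covered by any of your three cases. Your proposed Minkowski fix cannot help here, since it only splits long intervals into shorter pieces lying in the range of~(3), not short intervals into longer ones. And the fourth-moment approach via hypothesis~(2) fails in this range because its $O(p^{-1/2}(\log p)^4)$ error term dominates $|t-s|^{1+\delta}$ when $|t-s|$ is this small.

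The paper covers this missing range by the trivial bound $|\tilde L_p(t)-\tilde L_p(s)|\leq |I|/\sqrt{p}\leq p^{-\delta_1}$, raised to a power $\alpha_1\approx \delta_1^{-1}$ large enough that $p^{-\alpha_1\delta_1}\ll |t-s|^{2}$. This leaves three different moment exponents ($4$, $\alpha_1$, and the $\alpha$ from hypothesis~(3)) in three different ranges. The paper unifies them by exploiting the uniform P\'olya--Vinogradov-type bound $|\tilde L_p(t,\omega)|\ll \log p$ (which follows from completion and hypothesis~(1)): this allows any lower moment to be upgraded to the maximum exponent at the cost of a power of $\log p$, which is absorbed into $|t-s|^{\eps}$. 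Your restriction ``provided $\alpha\leq 4$'' is therefore unnecessary once this $L^\infty$ trick is in place.
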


\begin{remark}
  Note that (2) is, in practice, a special case of the main estimate
  of (the analogue of) Lemma~\ref{lm-katz} that is used to prove
  convergence in finite distributions. Moreover, (1) is a standard
  condition for the type of exponential sums we consider (typically,
  bounds on the Fourier transform would already follow from Weil's
  theory of exponential sums in one variable). 
\par
Thus, the practical meaning of this lemma is that, once convergence of
finite distributions is known ``for standard reasons'', tightness
becomes a consequence of the estimate~(\ref{eq-short}). The latter
concerns the average distribution (over $\omega\in\Omega_p$) of short
partial sums of the summands $\xi_p(x,\omega)$, where the length of
the sums is close to $p^{1/2}$, but can be a bit smaller.
\par
If, as one certainly expects in many cases, there exists $\eta>0$ such
that we have a uniform non-trivial individual bound
$$
\frac{1}{\sqrt{p}}\sum_{x\in I}\xi_p(x,\omega)\ll p^{-\eta}
$$
for all $\omega\in\Omega_p$ and all intervals $I$ of length about
$p^{1/2}$ (as in the statement of (3)), then taking $\alpha>0$ large
enough yields (3).
\par
This will suffice for Birch sums, but is not known for Kloosterman
sums at this time.  However, in some cases, one can get average bounds
without proving first individual estimates, and an example is given by
Theorem~\ref{th-main3}.
\end{remark}

We first assume the validity of Lemma~\ref{lm-tight}, and use it to
prove Theorems~\ref{th-main2} and~\ref{th-main3}.

\begin{proof}[Proof of Theorem~\ref{th-main2}]
  Recall that $\Omega_p=\Fpt$ and $\xi_p(x,a)=\psi_p(ax+x^3)$.  The
  first condition of Lemma~\ref{lm-tight} is then clear.  The second
  condition holds by (the analogue for the Birch sums of)
  Lemma~\ref{lm-katz}. For (3), the point is that individual bounds
  for sums over intervals of polynomials of rather short length are
  known, from methods such as Weyl differencing, so we can use the
  argument indicated in the previous remark.
\par
Precisely, by Weyl's method, one gets
$$
\sum_{x\in I}\psi_p(ax+x^3)\ll
|I|^{1+\eps}\Bigl(\frac{1}{|I|}+\frac{p}{|I|^3}\Bigr)^{1/4} \ll
|I|^{1/4+\eps}p^{1/4},
$$
for $1\leq |I|<p$ and for any $\eps>0$, where the implied constant
depends only on $\eps$ (see, e.g.,~\cite[Lemma 20.3]{ant}). In
particular, if we assume that
$$
p^{5/12}\leq |I|\leq p^{7/12}
$$
(for instance), then we have
$$
\frac{1}{\sqrt{p}}\sum_{x\in I}\psi_p(ax+x^3)\ll p^{-\eta}
$$
where $\eta>0$ and the implied constant are absolute.  For any
$\alpha>0$, it follows that
$$
\frac{1}{p-1}\sum_{a\in\Fpt} \Bigl|\frac{1}{\sqrt{p}} \sum_{x\in
  I}\psi_p(ax+x^3)\Bigr|^{\alpha} \ll p^{-\alpha\eta},
$$
and selecting $\alpha$ large enough, we obtain the desired
estimate~(\ref{eq-short}).
\end{proof}

\begin{proof}[Proof of Theorem~\ref{th-main3}]
  As before, it only remains to prove~(\ref{eq-short}) for the process
  $(\ekp_p(t))$ (here the summands $\psi_p(\alpha(ax+\bar{x}))$ are
  parameterized by $x\in\Fpt$). We take $\alpha=4$ and compute the
  fourth moment (just as Kloosterman did for the full interval to get
  the first non-trivial bounds for Kloosterman sums). 
\par
We have
\begin{multline*}
  \frac{1}{(p-1)^2}\sum_{(\alpha,a)\in\Fpt\times\Fpt}
  \Bigl|\frac{1}{\sqrt{p}}\sum_{x\in
    I}\psi_p(\alpha(ax+\bar{x}))\Bigr|^{4}
  \\=\frac{1}{p^2}\sumsum_{x_1,\ldots,x_4\in I}
  \frac{1}{p-1}\sum_{\alpha\in\Fpt}\psi_p\Bigl( \alpha\Bigl(
  \frac{1}{x_1} +\frac{1}{x_2} -\frac{1}{x_3} -\frac{1}{x_4} \Bigr)
  \Bigr)\\
  \times \frac{1}{p-1}\sum_{a\in\Fpt} \psi_p(\alpha
  a(x_1+x_2-x_3-x_4)).
\end{multline*}
\par
By orthogonality of characters, this is equal to
$$
\frac{1}{p(p-1)}\sumsum_{\substack{x_1,\ldots,x_4\in I\\x_1+x_2=x_3+x_4}}
  \frac{1}{p-1}\sum_{\alpha\in\Fpt}\psi_p\Bigl( \alpha\Bigl(
  \frac{1}{x_1} +\frac{1}{x_2} -\frac{1}{x_3} -\frac{1}{x_4} \Bigr)
  \Bigr)
+O(|I|^4p^{-3})
$$
and then to
$$
\frac{1}{(p-1)^2}\sumsum_{\substack{x_1,\ldots,x_4\in
    I\\x_1+x_2=x_3+x_4\\ x_1^{-1}+x_2^{-1}=x_3^{-1}+x_4^{-1}}}1
+O(|I|^4p^{-3}).
$$
\par
But, for any fixed $x_1$ and $x_2$, provided $x_1+x_2\not=0$,
the equations
$$
\begin{cases}
x_3+x_4=x_1+x_2\\
x_3^{-1}+x_4^{-1}=x_1^{-1}+x_2^{-1}
\end{cases}
$$
have at most two pairs of solutions $(x_3,x_4)$, so that the
contribution of these $(x_1,x_2)$ is at most $2|I|^2/(p-1)^2$. On the
other hand, if $x_1+x_2=0$, then we also have $x_3+x_4=0$, so that
these contribute also at most $|I|^2(p-1)^{-2}$. Hence we get
$$
  \frac{1}{(p-1)^2}\sum_{(\alpha,a)\in\Fpt\times\Fpt}
  \Bigl|\frac{1}{\sqrt{p}}\sum_{x\in
    I}\psi_p(\alpha(ax+\bar{x}))\Bigr|^{4}
\ll |I|^2p^{-2}+|I|^4p^{-3}.
$$
\par
If we take $|I|$ close to $p^{1/2}$, this is close to $p^{-1}$,
and~(\ref{eq-short}) therefore follows easily. 
\end{proof}

\begin{remark}
(1)  Without the average over $\alpha$, we obtain
\begin{multline*}
\frac{1}{p-1}\sum_{a\in\Fpt}
  \Bigl|\frac{1}{\sqrt{p}}\sum_{x\in I}
\psi_p(ax+\bar{x})\Bigr|^{4}
=\frac{1}{p(p-1)}\sumsum_{\substack{x_1,\ldots,x_4\in I\\x_1+x_2=x_3+x_4}}
\psi_p\Bigl(  \frac{1}{x_1} +\frac{1}{x_2} -\frac{1}{x_3} -\frac{1}{x_4} \Bigr)
\\  +O(|I|^4p^{-3}).
\end{multline*}
\par
Since the number of points of summation is about $|I|^3$ (because $I$
is an interval), which leads to a bound $p^{-1/2}$ when $|I|$ is
itself $p^{1/2}$, the difficulty is therefore that we must get some
cancellation in the exponential sum over the $x_i$'s.
\par
(2) Interestingly, if we interpolate between the partial sums
$$
\sum_{1\leq x\leq j}\psi_p(x+a\bar{x})
$$
(moving the parameter $a$), although the endpoint is still the
Kloosterman sum $\hypk_p(a)$, the corresponding process \emph{does}
converge in $C([0,1])$, but with the slightly different limit
$$
\sum_{h\not=-1}\frac{e(ht)-1}{2i\pi
  h}\st_h=\kpath(t)+\frac{e(-t)-1}{2i\pi}\st_{-1}
$$
(the value $h=-1$ is omitted because the relevant analogue of
Lemma~\ref{lm-katz} involves a product over $h$ of $\hypk_p(a(h+1))$,
which is constant, equal to $1/\sqrt{p}$, for $h=-1$, so that any
moment where $h=-1$ appears with positive multiplicity does not
contribute to the asymptotic).  Here tightness follows because
orthogonality gives
\begin{multline*}
  \frac{1}{p-1}\sum_{a\in\Fpt} \Bigl|\frac{1}{\sqrt{p}}\sum_{x\in I}
  \psi_p(x+a\bar{x})\Bigr|^{4}
  =\frac{1}{p(p-1)}\sumsum_{\substack{x_1,\ldots,x_4\in
      I\\x_1^{-1}1+x_2^{-1}=x_3^{-1}+x_4^{-1}}} \psi_p(
  x_1+x_2-x_3-x_4) \\
  +O(|I|^4p^{-3}),
\end{multline*}
and a result of Bourgain and Garaev~\cite[Th. 1]{bourgain-garaev}
shows that the number of points of summation is $\ll |I|^{8/3+\eps}$
for any $\eps>0$, provided $|I|\leq p^{3/4}$, which is enough to
verify the hypothesis of Lemma~\ref{lm-tight}.
\end{remark}

\begin{proof}[Proof of Lemma~\ref{lm-tight}]
  We will verify Kolmogorov's criterion~(\ref{eq-kol}) for suitable
  $\alpha>0$ and $\delta>0$. We will deal with the case where
  $\mathcal{X}=(\xi_{p}(x))_{x\in\Fp}$ is parameterized by $\Fp$, the
  other being analogous.
\par
Let $L_p(t)=K^{\mathcal{X}}_p(t)$ for $p$ prime and $t\in [0,1]$.  Let
also
$$
\tilde{L}_p(t,\omega)=\frac{1}{\sqrt{p}}\sum_{0\leq x\leq
  pt}\xi_p(x,\omega)
$$
be the discontinuous analogue of $L_p(t)$. We first reduce the problem
to proving a moment estimate for $\tilde{L}_p(t)-\tilde{L}_p(s)$.
\par
We do this in two steps. First, if $|t-s|<1/p$, then the definition by
linear interpolation implies that
$$
|L_p(t,a)-L_p(s,a)|\leq \sqrt{p}|t-s| \leq |t-s|^{1/2}
$$
and hence
\begin{equation}\label{eq-0}
  \expect(|L_p(t)-L_p(s)|^{\alpha})\leq
  |t-s|^{\alpha/2}
\end{equation}
for any $\alpha>0$ and all primes $p$, which is fine as soon as
$\alpha>2$.
\par
Thus we assume from now on that $|t-s|\geq 1/p$. We then use the bound
$$
|L_p(t,a)-\tilde{L}_p(t,a)|\leq \frac{1}{\sqrt{p}},
$$
for all $t\in [0,1]$ and $a\in\Fpt$ (as in~(\ref{eq-desmooth})) and
deduce
\begin{align}
  \expect(|L_p(t)-L_p(s)|^{\alpha})&\ll
  \expect(|\tilde{L}_p(t)-\tilde{L}_p(s)|^{\alpha})+
  p^{-\alpha/2}\nonumber\\
  & \ll\expect(|\tilde{L}_p(t)-\tilde{L}_p(s)|^{\alpha})+
  |t-s|^{\alpha/2}
\label{eq-moms}
\end{align}
for any $\alpha\geq 1$, where the implied constant depends only on
$\alpha$. This shows that, provided $\alpha>2$, we
obtain~(\ref{eq-kol}) from the corresponding statement for
$\tilde{L}_p(t)$. We now begin to prove this.
\par
We denote
$$
X_p(t)=\sum_{|h|<p/2}\frac{\alpha'_p(h;t)}{\sqrt{p}}\st_h,
$$
the analogue of the random variables in~(\ref{eq-xp}) for intervals
$0\leq j\leq pt$ instead of $1\leq j\leq (p-1)t$.  For $s\leq t$ in
$[0,1]$, we will also denote by $I$ the interval $ps\leq x\leq pt$, of
length $|I|\asymp |t-s|p$ (recall that $|s-t|\geq 1/p$ now).
\par
We denote by $\delta_1>0$ and $\delta_2>0$ the parameters
in~(\ref{eq-short}). We first make the remark that we may replace
$\delta_1$ by any smaller positive number without affecting the
validity of~(\ref{eq-short}), so that we can assume that
$\delta_1<\delta_2$.
\par
We first claim that
\begin{equation}\label{eq-c1}
\expect(|\tilde{L}_p(t)-\tilde{L}_p(s)|^4)=
\expect(|X_p(t)-X_p(s)|^4)+ O(p^{-1/2}(\log p)^4).
\end{equation}
\par
This indeed follows from the assumption of Lemma~\ref{lm-tight}, by
the same method used in the proof of convergence of finite
distributions.
\par
Next, we claim that there exists $C\geq 0$ such that, for all $s$, $t$
in $[0,1]$ and all $p$, we have
\begin{equation}\label{eq-c2}
\expect(|X_p(t)-X_p(s)|^4)\leq C|t-s|^2
\end{equation}
(in particular, the sequence $(X_p(t))_{t\in [0,1]}$ is itself tight
by Kolmogorov's criterion). Indeed, we can use the fact that
$$
X_p(t)-X_p(s)=
\sum_{|h|<p/2}\frac{\alpha'_p(h;t)-\alpha'_p(h;s)}{\sqrt{p}} \st_h
$$
is $\sigma_p$-subgaussian, where
$$
\sigma_p^2=\frac{1}{p}\sum_{|h|<p/2}|\alpha'_p(h;t)-\alpha'_p(h;s)|^2
=\frac{1}{p}\sum_{ps\leq x\leq pt}1\ll |t-s|
$$
(by the discrete Plancherel formula). Since, for a
$\sigma$-subgaussian variable $N$, we have the
bound
$$
\expect(|N|^4)\leq 256\sigma^4
$$
(see~(\ref{eq-subg})), the claim follows.
\par
Combining~(\ref{eq-c1}) and~(\ref{eq-c2}), we get for any $\eta>0$ and
$\eps>0$ the bound
$$
  \expect(|\tilde{L}_p(t)-\tilde{L}_p(s)|^4)
  \ll |t-s|^{1+\eta-\eps}
$$
for all $s$ and $t$ such that $|t-s|\geq p^{-1/(2(1+\eta))}$, where
the implied constant depends only on $\eps$. For suitable $\eta>0$ and
$\eps>0$, this gives
\begin{equation}\label{eq-bound-1}
  \expect(|\tilde{L}_p(t)-\tilde{L}_p(s)|^4)
  \ll |t-s|^{1+\delta'}
\end{equation}
for all $s$, $t$ such that $|t-s|\geq p^{-1/2+\delta_1}$, where
$\delta'>0$.
\par
Next, suppose that $p^{-1}\leq |t-s|\leq p^{-1/2-\delta_1}$. We then
note that the trivial bound
$$
|\tilde{L}_p(t,\omega)-\tilde{L}_p(s,\omega)|\leq
\frac{|I|}{\sqrt{p}}\ll p^{1/2}|t-s|\ll p^{-\delta_1}
$$
leads to
$$
\expect(|\tilde{L}_p(t)-\tilde{L}_p(s)|^{\alpha}) \ll
p^{-\alpha\delta_1}\ll |t-s|^{2}
$$
provided $\alpha>2\delta_1^{-1}$. Thus we get
\begin{equation}\label{eq-bound-2}
  \expect(|\tilde{L}_p(t)-\tilde{L}_p(s)|^{\alpha_1})\ll |t-s|^{2},
\end{equation}
for $\alpha_1=4\delta_1^{-1}$ and $p^{-1}\leq |t-s|\leq
p^{-1/2-\delta_1}$.
\par
Finally, assume that
$$
p^{-1/2-\delta_1}\leq |s-t|\leq p^{-1/2+\delta_1},
$$
so that
$$
p^{1/2-\delta_1}\ll |I|\ll p^{1/2+\delta_1}.
$$
\par
By~(\ref{eq-short}), with $\alpha>0$ as in that bound, we deduce that
\begin{equation}\label{eq-bound-3}
  \expect(|\tilde{L}_p(t)-\tilde{L}_p(s)|^{\alpha})\ll 
  p^{-1/2-\delta_2}\ll |t-s|^{(1/2+\delta_2)/(1/2+\delta_1)}\leq |t-s|^{1+\delta''}
\end{equation}
for some $\delta''>0$, since we assumed that $\delta_2>\delta_1$.
\par
We can now combine~(\ref{eq-bound-1}),~(\ref{eq-bound-2})
and~(\ref{eq-bound-3}). 
All ranges of $|t-s|$ are covered by the combination of the three
bounds, and we have a suitable inequality for each range. The
exponents on both sides of the inequalities do not necessarily match,
however. But since we have
$$
\tilde{L}_p(t,\omega)\ll (\log p)
$$
uniformly (by the completion method, as in~(\ref{eq-complete}), which
applies thanks to the assumption (1) of Lemma~\ref{lm-tight} that the
Fourier transforms are uniformly bounded by $H$), we can replace the
exponent of $|\tilde{L}_p(t)-\tilde{L}_p(s)|$ by
$\max(4,\alpha_1,\alpha)>0$, which uniformizes the exponent on the
left, at the cost of a power of $\log p$. Since $|t-s|\leq 1$, we can
also replace the exponent of $|t-s|$ by
$$
0<\delta<\min(1,\delta',\delta'')
$$
and obtain then
$$
\expect(|\tilde{L}_p(t)-\tilde{L}_p(s)|^{\alpha})\ll (\log
p)^{C}|t-s|^{1+\delta}
$$
for some $\alpha>0$, $\delta>0$, $C\geq 0$ and for all $s$, $t$ with
$1/p\leq |t-s|\leq 1$. In this range, this means that
$$
\expect(|\tilde{L}_p(t)-\tilde{L}_p(s)|^{\alpha})\ll |t-s|^{1+\delta-\eps}
$$
for any $\eps>0$, where the implied constant depends only on
$\eps>0$. Together with the introductory reduction, this verifies
Kolmogorov's criterion.
\end{proof}

\section{Applications}
\label{sec-applis}

The fact that, for any fixed $t_0\in [0,1]$, there is a limiting
distribution for $K_p(t_0)$ (or for the Birch process at $t_0$) is
already interesting, although it is only the simplest case of
convergence of finite distributions. We can then use known results on
sums of independent variables to deduce some interesting properties of
the corresponding partial sums.  We study here only the tail behavior
of the limiting distribution at $t_0$, and get:

\begin{proposition}\label{pr-tail}
  Let $t_0\in ]0,1[$ be given. There exists a constant $c(t_0)>0$ such
  that for any $A>0$, we have
\begin{multline*}
  \frac{1}{c(t_0)}\exp(-\exp(c(t_0)A)) \\\leq \lim_{p\ra +\infty}
  \frac{1}{p-1}|\{a\in\Fpt\,\mid\, |\Reel(K_p(t_0))|\geq A\}| \\\leq
  c(t_0)\exp\Bigl(-\exp\Bigl(\frac{A}{c(t_0)}\Bigr)\Bigr),
\end{multline*}
if $t_0\not=1/2$, and
\begin{multline*}
  \frac{1}{c(t_0)}\exp(-\exp(c(t_0)A)) 
 \\\leq \lim_{p\ra +\infty}
  \frac{1}{p-1}|\{a\in\Fpt\,\mid\, |\Imag(K_p(t_0))|\geq A\}| \\\leq
  c(t_0)\exp\Bigl(-\exp\Bigl(\frac{A}{c(t_0)}\Bigr)\Bigr).
\end{multline*}
\end{proposition}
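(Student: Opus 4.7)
The plan is to transfer the finite-$p$ probabilities to the limit via Theorem~\ref{th-main}(2) and then prove two-sided double-exponential tail bounds for the random variable $\Reel(\kpath(t_0))$ (or $\Imag(\kpath(t_0))$ when $t_0 = 1/2$), exploiting the explicit structure of $\kpath(t_0)$ as a random Fourier series with bounded i.i.d.\ coefficients.

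First I would observe that $\Reel(\kpath(t_0))$ has an absolutely continuous distribution whenever $t_0 \ne 1/2$, since the single summand $\sin(2\pi t_0)(\st_1 + \st_{-1})/(2\pi)$ already has a continuous law and the remaining terms are independent of it. Hence every $A > 0$ is a continuity point of its CDF, and Theorem~\ref{th-main}(2) gives $\lim_p \proba(|\Reel(K_p(t_0))| \geq A) = \proba(|\Reel(\kpath(t_0))| \geq A)$. Pairing frequencies $\pm h$ one obtains
\[
\Reel(\kpath(t_0)) = t_0 \st_0 + \sum_{h \geq 1} c_h Y_h, \qquad c_h = \frac{\sin(2\pi h t_0)}{2\pi h}, \qquad Y_h = \st_h + \st_{-h},
\]
with the $Y_h$ i.i.d., symmetric, supported in $[-4,4]$; the imaginary part admits a strictly analogous expansion that is nondegenerate at $t_0 = 1/2$ (with odd-$h$ terms only).

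For the upper bound I would control the Laplace transform of $\sum_h c_h Y_h$. The crucial observation is that each summand is bounded, so it satisfies simultaneously Hoeffding's bound $\expect(e^{\lambda c_h Y_h}) \leq \exp(8\lambda^2 c_h^2)$ and the trivial bound $\expect(e^{\lambda c_h Y_h}) \leq \exp(4\lambda|c_h|)$. Splitting the product at the threshold $h \asymp \lambda$ (where $\lambda|c_h| \asymp 1$) and using $|c_h| \leq 1/(2\pi h)$, the small-$h$ contribution is $\ll \lambda \sum_{h \leq \lambda} 1/h \ll \lambda\log\lambda$ and the large-$h$ one is $\ll \lambda^2 \sum_{h > \lambda} 1/h^2 \ll \lambda$, giving $\log \expect(e^{\lambda\Reel(\kpath(t_0))}) \ll \lambda\log\lambda$ for $\lambda > 1$. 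A Chernoff minimisation at $\lambda = \exp(A/c - 1)$ then yields $\proba(\Reel(\kpath(t_0)) \geq A) \leq \exp(-\exp(A/c))$; the symmetric argument covers the lower tail, and the imaginary part is handled identically.

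For the lower bound I would use a Paley--Zygmund-style localisation. Given $A > 0$, fix a large integer $N$ and define $E_N$ to be the event that $(\st_h + \st_{-h})\,\mathrm{sign}(c_h) \geq 3$ for every $h \leq N$ with $c_h \ne 0$. Since the Sato-Tate density is strictly positive on $(-2,2)$, each constituent event has probability at least some $p_1 > 0$, so $\proba(E_N) \geq p_1^N$. On $E_N$ the head of the series is at least $3 \sum_{h \leq N}|c_h|$, while independence and Chebyshev show that $|t_0\st_0 + \sum_{h > N} c_h Y_h| \leq 3$ outside a set of conditional probability $O(1/N)$. A Weyl-equidistribution argument (or a direct periodicity computation for rational $t_0$) gives $\sum_{h \leq N}|c_h| \geq c_1(t_0)\log N$ with $c_1(t_0) > 0$ for $t_0 \ne 1/2$, and similarly for the odd-frequency sum at $t_0 = 1/2$. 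Choosing $N \asymp \exp(A/c_1(t_0))$ then produces $\proba \geq \tfrac{1}{2} p_1^N \geq c(t_0)^{-1}\exp(-\exp(c(t_0)A))$ for $A$ large, and absolute continuity handles small $A$. The main obstacle is the $t_0$-dependence of the constants in this lower bound, particularly the verification of $c_1(t_0) > 0$ for every admissible $t_0$, which forces one to track the distribution of $\{h t_0 \bmod 1\}$ and to treat rational versus irrational $t_0$ slightly differently.
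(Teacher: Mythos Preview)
Your proposal is correct and follows the same overall architecture as the paper: reduce to tail bounds for $\Reel(\kpath(t_0))$ (resp.\ $\Imag(\kpath(t_0))$) via the convergence of finite distributions, then prove a two-sided double-exponential estimate. The lower bounds are essentially the same argument in both cases: force the first $N$ coefficients $Y_h$ to have the correct sign and be bounded away from zero, use $\sum_{h\leq N}|c_h|\gg_{t_0}\log N$ (via equidistribution or periodicity of $ht_0\bmod 1$), and take $N\asymp\exp(cA)$; you add an explicit Chebyshev control on the tail where the paper invokes symmetry, and you also spell out the absolute-continuity step justifying that every $A$ is a continuity point of the limiting CDF, which the paper leaves implicit. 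The genuine difference is in the upper bound: the paper appeals to a martingale concentration lemma of Ledoux--Talagrand (their Lemma~1.8), which packages the double-exponential decay in terms of $\sup_n (n+1)\|X_n\|_{L^\infty}$, whereas you argue directly via the Laplace transform, splitting frequencies at $h\asymp\lambda$ and combining the trivial bound $\expect(e^{\lambda c_hY_h})\leq e^{4\lambda|c_h|}$ for small $h$ with Hoeffding for large $h$ to obtain $\log\expect(e^{\lambda X})\ll\lambda\log\lambda$ before a Chernoff optimisation. Your route is more self-contained and makes transparent exactly where the $\log$ (hence the second exponential) originates; the paper's route is shorter once one is willing to quote the black-box lemma.
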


\begin{remark}
  (1) For $t_0=1/2$, we have $\Reel(K_p(1/2))=\demi \st_0$, so that
  the upper-bound estimate for $|\Reel(K_p(1/2))|$ holds trivially,
  but the lower-bound fails for $A\geq 1$.
\par
(2) In fact, since we know the exact distribution of $\kpath(t_0)$,
one could probably improve this result with more work, using moment
methods similar to those used by Granville and
Soundararajan~\cite{granville-sound-zeta}.
\end{remark}

\begin{proof}
  We consider the real part, the imaginary part being handled
  similarly.  By convergence of finite distributions, it is equivalent
  to prove that there exists $c(t_0)>0$ such that
$$
  c(t_0)^{-1}\exp(-\exp(c(t_0)A))\leq 
\proba(|\Reel(\kpath(t_0))|\geq A)
\leq   c(t_0)\exp(-\exp(c(t_0)^{-1}A))
$$
for $t_0\in ]0,1[$, $t_0\not=1/2$.
\par
We begin with the upper-bound. We will use the martingale method
explained by Ledoux and Talagrand~\cite[\S 1.3]{ledoux-talagrand}, but
there are other options (e.g., the work of Montgomery and
Odlyzko~\cite[Th. 2]{m-o} or probabilistic methods similar to those in
the later Proposition~\ref{pr-tail2}, along the lines of
Montgomery-Smith's work for Rademacher series~\cite{ms}).
\par
We write
$$
\Reel(\kpath(t_0))=\sum_{n\geq 0}X_n,
$$
where 
\begin{gather*}
  X_0=t_0\st_0 ,\quad\text{ and } \quad X_n=\frac{\sin 2\pi nt_0}{2\pi n}
(\st_n+\st_{-n})
\end{gather*}
for $n\geq 1$. The random variables $(X_n)_{n\geq 0}$ are independent,
bounded, integrable with expectation zero. In particular, for any
$N\geq 1$, the sum
$$
\sum_{0\leq n\leq N}X_n
$$
is an example of a \emph{sum of martingale differences} as described
in~\cite[p. 31]{ledoux-talagrand} (with $d_i=X_i$), with expectation
$0$. By~\cite[Lemma 1.8]{ledoux-talagrand}, we have
$$
\proba\Bigl(\Bigl|\sum_{0\leq n\leq N}X_n\Bigr|>A\Bigr) \leq
16\exp\Bigl(-\exp\Bigl(\frac{A}{4a_N}\Bigr)\Bigr)
$$
for any $A>0$, where
$$
a_N=\max_{0\leq i\leq N} (i+1)\|X_i\|_{L^{\infty}(\Omega)}.
$$
\par
For all $N\geq 1$, we have
\begin{align*}
  a_N\leq a&=\sup_{n\geq 0} (n+1)\|X_n\|_{L^{\infty}(\Omega)}\\
  &= \max\Bigl(2|t_0|,\sup_{n\geq 1} \frac{2n}{\pi n}|\sin(2\pi
  nt_0)|\Bigr)\\
  &=\max\Bigl(2|t_0|,\frac{2}{\pi}\sup_{n\geq 1} |\sin(2\pi
  nt_0)|\Bigr)\leq\max\Bigl(2|t_0|,\frac{2}{\pi}\Bigr)
\end{align*}
and in fact, if $t_0$ is irrational, then $a=\max(2|t_0|,2/\pi)$ (and
otherwise it can be computed quite easily as a function of the
denominator of $t_0$).
\par
Thus, for any $N\geq 1$ and $A>0$, we get
$$
\proba\Bigl(\Bigl|\sum_{0\leq n\leq N}X_n\Bigr|>A\Bigr) \leq
16\exp\Bigl(-\exp\Bigl(\frac{A}{4a}\Bigr)\Bigr).
$$
\par
We can now easily let $N\ra +\infty$: fix $A>0$ and let $\eps>0$;
there exists $N\geq 1$ such that
$$
\proba\Bigl(\Bigl|\sum_{n>N}X_n\Bigr|>\eps\Bigr) \leq \eps
$$
(convergence in probability of the partial sums of $\kpath(t_0)$,
which follows from Proposition~\ref{pr-limit}) and thus
$$
\proba\Bigl(\Bigl|\sum_{n\geq 0}X_n\Bigr|>A\Bigr) \leq \eps+
\proba\Bigl(\Bigl|\sum_{0\leq n\leq N}X_n\Bigr|>A-\eps\Bigr) \leq
\eps+16\exp\Bigl(-\exp\Bigl(\frac{A-\eps}{4a}\Bigr)\Bigr).
$$
\par
Letting $\eps\ra 0$ gives the desired upper-bound for the real part,
in a rather precise and explicit form.
The lower-bound can here be derived elementarily. Write
$$
X_n=r_nY_n
$$
with
\begin{align*}
r_0=t_0,\quad\quad r_n=\frac{\sin 2\pi nt_0}{2\pi n},\text{ for }
n\geq 1\\
Y_0=\st_0,\quad\quad Y_n=\st_n+\st_{-n}\text{ for } n\geq 1.
\end{align*}
\par
Using symmetry and independence of the variables $(Y_n)$, we have
$$
\proba\Bigl(\Reel(\kpath(t_0))> A\Bigr)\geq \frac{1}{4}\prod_{1\leq
  n\leq N} \proba(|Y_n|\geq 1)=\frac{1}{4}u^{-N-1}
$$
where $u=\proba(Y_1\geq 1)=\proba(Y_1\leq -1)>0$, for any $N$ such
that
$$
\sum_{1\leq n\leq N}|r_n|> A.
$$
\par
One can find such an $N$ with $N\ll \exp(\delta_3 A)$ for some
$\delta_3>0$, where $\delta_3$ and the implied constant depend on
$t_0$ (e.g., again equidistribution of $nt_0$ in $\Rr/\Zz$, if $t_0$
is irrational, or by periodicity, using the fact that $t_0$ is assumed
to be $\not=1/2$). This gives the desired lower-bound.
\par
As already mentioned, the imaginary part is handled similarly; note
that there is no exception similar to $t_0=1/2$ because the sequence
$(\cos(2\pi nt_0)-1)/(2\pi n)$ vanishes identically only if $t_0\in
\{0,1\}$.
\end{proof}

We now consider Theorem~\ref{th-tails}, which is an example of
application requiring convergence in law in $C([0,1])$. Since the norm
map $\varphi\mapsto \|\varphi\|_{\infty}$ is continuous on $C([0,1])$,
it follows formally from the definition of convergence in law that the
random variables $(\alpha,a)\mapsto \|\ekp_t(t;\alpha,a)\|_{\infty}$
(resp. $a\mapsto \|K^{\mathcal{X}}_p(a)\|_{\infty}$ for Birch sums)
converge in law to the random variable $\|\kpath\|_{\infty}$ as $p\ra
+\infty$. (Recall that the $L^{\infty}$-norm refers to the space
$C([0,1])$, and not to the space $L^{\infty}(\Omega)$).
\par
Moreover, since the maximum of the modulus along a segment
in $\Cc$ is achieved at one of the end points, we have
$$
\|\ekp_p(\cdot;\alpha,a)\|_{\infty}= \max_{1\leq j\leq p-1}
\frac{1}{\sqrt{p}} \Bigl|\sum_{1\leq x\leq
  j}\psi_p(\alpha(ax+\bar{x}))\Bigr|,
$$
resp.
$$
\|K^{\mathcal{X}}_p(\cdot;a)\|_{\infty}= \max_{0\leq j\leq p-1}
\frac{1}{\sqrt{p}} \Bigl|\sum_{0\leq x\leq j}\psi_p(ax+x^3)\Bigr|.
$$
\par
Hence, defining $\mu$ as the distribution of $\|\kpath\|_{\infty}$,
Theorem~\ref{th-main2} gives:

\begin{proposition}\label{pr-mu}
  There exists a probability measure $\mu$ on $[0,+\infty[$ such that
  for any bounded continuous function $f$ on $[0,+\infty[$, we have
$$
\lim_{p\ra +\infty} \frac{1}{p-1} \sum_{a\in\Fpt} f\Bigl(\max_{0\leq
  j\leq p-1}\frac{1}{\sqrt{p}} \Bigl|\sum_{0\leq x\leq
  j}\psi_p(ax+x^3)\Bigr| \Bigr)=\int f(x)d\mu(x),
$$
and
$$
\lim_{p\ra +\infty} \frac{1}{(p-1)^2}
\sumsum_{(\alpha,a)\in\Fpt\times\Fpt} f\Bigl(\max_{0\leq j\leq
  p-1}\frac{1}{\sqrt{p}} \Bigl|\sum_{0\leq x\leq
  j}\psi_p(\alpha(ax+\bar{x}))\Bigr| \Bigr)=\int f(x)d\mu(x).
$$
\end{proposition}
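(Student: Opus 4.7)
The plan is to deduce the proposition directly from Theorems~\ref{th-main2} and~\ref{th-main3} by composition with the continuous map $\varphi\mapsto \|\varphi\|_\infty$, following the remarks already made just before the statement.

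First, I would define $\mu$ as the probability distribution on $[0,+\infty[$ of the real-valued random variable $\|\kpath\|_\infty$, where $\kpath$ is regarded as an element of $C([0,1])$. This is well-defined since Proposition~\ref{pr-limit} ensures that $\kpath$ is almost surely continuous, and hence $\|\kpath\|_\infty<\infty$ almost surely.

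Next, I would observe that the evaluation map $N\colon C([0,1])\to [0,+\infty[$ given by $N(\varphi)=\|\varphi\|_\infty$ is continuous with respect to the topology of uniform convergence (in fact, $1$-Lipschitz). Therefore, for any bounded continuous function $f\colon[0,+\infty[\to\Cc$, the composition $f\circ N\colon C([0,1])\to\Cc$ is bounded and continuous on $C([0,1])$. The convergence in law in $C([0,1])$ provided by Theorems~\ref{th-main2} and~\ref{th-main3} then yields
$$
\lim_{p\to +\infty}\expect\bigl(f(\|K^{\mathcal{X}}_p\|_\infty)\bigr)=\expect\bigl(f(\|\kpath\|_\infty)\bigr)=\int f(x)\,d\mu(x),
$$
and similarly for $\ekp_p$. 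Equivalently, one may invoke the continuous mapping theorem to conclude that the real-valued random variables $\|K^{\mathcal{X}}_p\|_\infty$ and $\|\ekp_p\|_\infty$ converge in law to the common limit $\|\kpath\|_\infty$, whose law is $\mu$ by definition.

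Finally, I would identify the sup-norms with the discrete maxima appearing in the statement: since the modulus of an affine function on a closed segment in $\Cc$ attains its maximum at one of the endpoints, the piecewise-linear interpolation $K^{\mathcal{X}}_p(t,a)$ satisfies
$$
\|K^{\mathcal{X}}_p(\cdot,a)\|_\infty=\max_{0\leq j\leq p-1}\frac{1}{\sqrt{p}}\Bigl|\sum_{0\leq x\leq j}\psi_p(ax+x^3)\Bigr|,
$$
and analogously for $\ekp_p$. Substituting these identifications into the two convergence statements above gives exactly the two displayed limits. The only non-obstacle to check is the continuity and boundedness of $N$, which is elementary, so there is really no hard step here: the content of the proposition is entirely packaged in Theorems~\ref{th-main2} and~\ref{th-main3}, and this proof only extracts a tidy corollary via the continuous mapping principle.
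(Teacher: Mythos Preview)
Your proposal is correct and follows essentially the same approach as the paper: the paper also defines $\mu$ as the law of $\|\kpath\|_\infty$, uses the continuity of the sup-norm map on $C([0,1])$ together with Theorems~\ref{th-main2} and~\ref{th-main3} to obtain convergence in law of $\|K^{\mathcal{X}}_p\|_\infty$ and $\|\ekp_p\|_\infty$, and identifies these sup-norms with the discrete maxima via the endpoint-maximum property of affine segments. The argument is indeed just an application of the continuous mapping principle, exactly as you say.
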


We expect of course that the same holds for Kloosterman sums without
the average over $\alpha$.
\par 
Using this result, we can now prove Theorem~\ref{th-tails}, by getting
suitable tail bounds for the limiting distribution of $\kpath$. More
precisely, Theorem~\ref{th-tails} follows from the following:

\begin{proposition}\label{pr-tail2}
  There exists $c>0$ such that
$$
c^{-1}\exp(-\exp(cA)) \leq \proba\Bigl(\|\kpath\|_{\infty}\geq A\Bigr)
\leq c\exp(-\exp(c^{-1}A))
$$
for any $A>0$.
\end{proposition}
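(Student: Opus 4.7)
Since $\|\kpath\|_\infty \geq |\Reel(\kpath(t_0))|$ for any $t_0 \in [0,1]$, I fix any $t_0 \in (0,1)$ with $t_0 \neq 1/2$ (say $t_0 = 1/3$) and apply the lower bound of Proposition~\ref{pr-tail} directly: this gives $\proba(\|\kpath\|_\infty \geq A) \geq c(t_0)^{-1} \exp(-\exp(c(t_0) A))$, which is of the required form after absorbing $c(t_0)$ into an absolute constant.

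\textbf{Upper bound, uniform pointwise tails.} The plan is to combine a uniform pointwise tail bound with a discretization argument. First, I observe that the proof of Proposition~\ref{pr-tail} in fact produces a bound uniform in $t \in [0,1]$: the Ledoux--Talagrand parameter $a = \max(2|t|,2/\pi)$ is bounded by $2$ independently of $t$, and the parallel argument applied to $|\Imag(\kpath(t))|$ has no degeneracy at $t = 1/2$ (the coefficients $(1 - \cos(2\pi h t))/(2\pi h)$ do not all vanish there). Combining via $|\kpath(t)| \leq |\Reel(\kpath(t))| + |\Imag(\kpath(t))|$ yields $\proba(|\kpath(t)| > A) \leq c_1 \exp(-\exp(A/c_1))$ for all $t \in [0,1]$ with absolute $c_1$.

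\textbf{Upper bound, discretization.} Fix $\alpha \in (0,1/2)$ and let $H$ denote the $\alpha$-H\"older seminorm of $\kpath$ on $[0,1]$, a.s.\ finite by Proposition~\ref{pr-limit}. General integrability results for random Fourier series with subgaussian coefficients (Kahane's theory) give at least an exponential tail, $\proba(H > L) \leq c_2 \exp(-L/c_2)$. Given $A > 0$ large, set $N = \lceil e^A \rceil$ and $t_k = k/N$ for $0 \leq k \leq N$. Every $t \in [0,1]$ is within $1/N$ of some $t_k$, so $\|\kpath\|_\infty \leq \max_k |\kpath(t_k)| + H N^{-\alpha}$ and hence
\[
\proba(\|\kpath\|_\infty > A) \leq \proba\Bigl(\max_k |\kpath(t_k)| > \tfrac{A}{2}\Bigr) + \proba\Bigl(H > \tfrac{A}{2}\, e^{\alpha A}\Bigr).
\]
A union bound on the first term gives $(N+1) c_1 \exp(-\exp(A/(2c_1)))$; since $N+1 \leq 2 e^A$, the linear factor is swallowed by the double exponential and the first term is $\leq \exp(-\exp(A/(3c_1)))$ for $A$ large. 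The second term is $\leq c_2 \exp(-A\, e^{\alpha A}/(2c_2))$, comfortably dominated by $\exp(-\exp(A/c_0))$ for any $c_0 > 1/\alpha$. Choosing $c$ to majorize both rates completes the proof.

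\textbf{Main obstacle.} The nontrivial input is the exponential tail bound on the H\"older seminorm $H$, which is not explicitly recorded in Proposition~\ref{pr-limit}; it must be extracted from Kahane's general theorems on exponential integrability of sup norms of random Fourier series with bounded coefficients, applied at the level of the normalized differences $(\kpath(t) - \kpath(s))/|t-s|^\alpha$. A more self-contained alternative is to bypass $H$ entirely and run a generic chaining argument along dyadic scales: each increment $\kpath(t) - \kpath(s)$ is itself a Sato--Tate Fourier series with coefficients of size $\min(|t-s|, 1/|h|)$, hence satisfies its own uniform double-exponential tail by Ledoux--Talagrand, and chaining over $O(\log A)$ scales costs only a polynomial factor in $A$ that is absorbed by the double exponential. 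Either way, the conceptual picture is that the pointwise double-exponential tail is the bottleneck, and passing to the supremum over $t$ is essentially free.
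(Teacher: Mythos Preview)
Your lower bound matches the paper's exactly (both invoke Proposition~\ref{pr-tail}). For the upper bound, the two arguments are genuinely different.

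The paper works directly in the Banach space $C_{\Rr}([0,1])$: it writes $\Reel(\kpath)=\sum X_n\varphi_n$ with $\varphi_n\in C_{\Rr}([0,1])$ and applies Talagrand's vector-valued concentration inequality (\cite[Th.~13.2]{talagrand}), which yields $\proba(\|\Reel(\kpath)\|_\infty>m+4N(s))\leq 4e^{-s^2/16}$. The whole proof then reduces to a computation (Lemma~\ref{lm-interp}) showing that the $K$-functional $N(s)$ grows like $\log s$, after which one solves $m+4N(s)=A$ for $s$. No discretization, no H\"older regularity, no chaining---the Banach-space machinery does all the work in one shot.

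Your route is more hands-on: uniform pointwise double-exponential tails (the same Ledoux--Talagrand martingale lemma used inside Proposition~\ref{pr-tail}), a grid of $e^A$ points, and H\"older control of the oscillation between grid points. This is a perfectly valid strategy and arguably more transparent, since it separates the two sources of strength (boundedness of the $\st_h$ for the double-exponential pointwise tail; subgaussianity of increments for regularity). The one soft spot you correctly flag is the exponential tail on the H\"older seminorm $H$: this is not literally stated in Proposition~\ref{pr-limit} or in the Kahane references cited there, but it follows from the standard dyadic-increment estimate (each $\kpath((k{+}1)2^{-n})-\kpath(k2^{-n})$ is subgaussian with parameter $\ll 2^{-n/2}$, union bound over $k$, sum over $n$), which gives in fact Gaussian tails for $H$. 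With that lemma inserted, your argument is complete. Your ``alternative'' chaining sketch at the end is vaguer and would need more care to make rigorous, but it is not needed.

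In short: the paper buys brevity by quoting a heavy Banach-space theorem and then proving a tailored estimate on $N(s)$; your approach trades that for an extra regularity lemma but keeps the ingredients closer to first principles.
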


\begin{proof}
  The lower bound is an immediate consequence of the lower-bound in
  Proposition~\ref{pr-tail} (for $t_0=1/2$ and the imaginary part,
  say).
\par
For the upper bound, we will apply some results of probability theory
in the Banach space $C_{\Rr}([0,1])$ of real-valued continuous
functions on $[0,1]$, dealing separately with the real and imaginary
parts of $\kpath$.
\par
We view $\Reel(\kpath)$ as the sum
$$
\Reel(\kpath)=X_0\varphi_0+\sum_{n\geq 1} X_n\varphi_n
$$
where $(X_n)_{n\geq 0}$ are independent, symmetric, real-valued random
variables with
$$
|X_0|\leq 2,\quad\quad |X_n|\leq 4\text{ for } n\geq 1,
$$
and
$$
\varphi_0(x)=x,\quad\quad \varphi_n(x)=\frac{\sin(2\pi nx)}{2\pi
  n}\text{ for } n\geq 1
$$
are vectors in $C_{\Rr}([0,1])$.
\par
By a result of Talagrand~\cite[Remarks after Th. 13.2,
(13.12)]{talagrand} (or almost equivalently an adaptation of the main
theorem of~\cite{d-ms} to the variables $(X_n)_{n\geq 1}$ instead of
Rademacher variables, replacing the crucial theorem of
Talagrand~\cite[Theorem A]{d-ms} used in its proof
by~\cite[Th. 13.2]{talagrand}) we have
\begin{align*}
  \proba\Bigl(\|\Reel(\kpath)\|_{\infty}> m + 4N(s)\Bigr)&=
  \proba\Bigl(\Bigl\|\sum_{n\geq 0}X_n\varphi_n\Bigr\|_{\infty}> m +
  4N(s)\Bigr)\\&\leq 4\exp(-s^2/16)
\end{align*}
for any $s>0$, where
we have denoted by $m$ a median of the random variable
$\|\Reel(\kpath)\|_{\infty})$ and the function $N(s)$ is the function
denoted $K_{1,2}^w((x_n),t)$ in~\cite{d-ms} (or $\kappa(t)$
in~\cite[p. 199]{talagrand}) for the sequence $x_n=\varphi_n$ (the
factor $4$ in front of $N(s)$ is due to the assumption
in~\cite{talagrand} that the random variables are bounded by
$1$). 
\par
We have
$$
\proba(\|\Reel(\kpath)\|_{\infty}>A)\leq
\proba\Bigl(\|\Reel(\kpath)\|_{\infty}> m +4N(s)\Bigr)
$$
for any $s>0$ such that $m+4N(s)\leq A$.
\par
Now we claim that there exists $c>0$ such that
$$
N(s)\leq c\log(cs)
$$
for all $s\geq 1$.
\par
This is proved in Lemma~\ref{lm-interp} below. Assuming this property,
we take
$$
s=\frac{1}{c}\exp\Bigl(\frac{A-m}{4c}\Bigr).
$$ 
\par
We may clearly assume that $A$ is large enough so that $s\geq 1$ (the
desired estimate being trivial otherwise). Then $s\geq 1$ also
satisfies $m+N(s)\leq A$, and we deduce that
$$
\proba(\|\Reel(\kpath)\|_{\infty}>A)\leq 4\exp(-s^2/16),
$$
which has the required form for the real part of $\kpath$. A similar
argument applies to the imaginary part, finishing the proof.
\end{proof}

We now prove the estimate we used:

\begin{lemma}\label{lm-interp}
  With notation as in the proof above, there exists $c>0$ such that we
  have
$$
N(s)\leq c\log cs
$$
for all $s\geq 1$.
\end{lemma}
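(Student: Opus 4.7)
The plan is to unpack the definition of $N(s)$ as the K-functional $K_{1,2}^w((\varphi_n), s)$ from \cite{d-ms} applied to the sequence $(\varphi_n)_{n \geq 0}$ of elements of $C_\Rr([0,1])$. Concretely, this functional is an infimum over decompositions $\varphi_n = u_n + v_n$, and produces an upper bound of the form
$$
N(s) \leq \inf_{\varphi_n = u_n + v_n}\Bigl( \sum_{n \geq 0} \|u_n\|_\infty \;+\; s\cdot \|(\|v_n\|_\infty)_n\|_{\ell^{2,\infty}}\Bigr),
$$
where $\ell^{2,\infty}$ denotes the weak-$\ell^2$ quasi-norm applied to the scalar sequence of sup-norms. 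Thus any explicit decomposition furnishes an upper bound, and I will simply truncate at a well-chosen level.

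The numerical inputs are $\|\varphi_0\|_\infty = 1$ and $\|\varphi_n\|_\infty \leq 1/(2\pi n)$ for $n \geq 1$. For a cutoff $N \geq 2$ to be chosen in terms of $s$, I would take $u_n = \varphi_n \charfun_{n \leq N}$ and $v_n = \varphi_n - u_n$. The head gives
$$
\sum_n \|u_n\|_\infty \;\leq\; 1 + \frac{1}{2\pi} \sum_{n=1}^N \frac{1}{n} \;\ll\; \log N.
$$
For the tail, set $a_n = \|v_n\|_\infty$; the non-zero values of $a_n$ are bounded by $1/(2\pi n)$ for $n > N$, so the decreasing rearrangement satisfies $a^*_k \leq 1/(2\pi(N+k))$ for $k \geq 1$, giving
$$
\|(a_n)\|_{\ell^{2,\infty}} \;=\; \sup_{k \geq 1} k^{1/2} a^*_k \;\ll\; \sup_{k \geq 1} \frac{k^{1/2}}{N+k} \;\ll\; \frac{1}{\sqrt{N}},
$$
the maximum being attained near $k \asymp N$.

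Combining the two bounds yields $N(s) \ll \log N + s/\sqrt{N}$, and choosing $N = \lceil s^2 \rceil$ (valid since $s \geq 1$) collapses both terms to size $O(\log s + 1)$. This delivers $N(s) \leq c \log(cs)$ for a suitable absolute constant $c > 0$, as required.

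The only real obstacle is unwinding the precise definition of $K_{1,2}^w$ from \cite{d-ms} and confirming that the elementary upper bound through decompositions is indeed available. Once that is in place, the head--tail estimate above is routine; should the reference use ordinary $\ell^2$ rather than weak $\ell^2$, one simply replaces the weak-$\ell^2$ computation by the clean bound $\bigl(\sum_{n > N} 1/n^2\bigr)^{1/2} \ll 1/\sqrt{N}$, and the same choice $N \asymp s^2$ produces the same conclusion.
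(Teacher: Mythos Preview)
Your proposal is correct and is essentially the paper's argument: the paper unwinds the Dilworth--Montgomery-Smith definition via the dual characterization $N(s)=\sup_{\|\lambda\|\leq 1}\inf\{\|x_1\|_1+s\|x_2\|_2:(\lambda(\varphi_n))=x_1+x_2\}$ (with ordinary $\ell^2$, so your fallback at the end is the relevant one), then makes exactly the same truncation at $\lfloor s^2\rfloor$ and uses the same bounds $|\lambda(\varphi_n)|\leq\|\varphi_n\|_\infty\leq 1/(2\pi n)$. Your vector-level decomposition is just the dualized version of theirs, and the numerical estimates on head and tail are identical.
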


\begin{proof}
  By definition (see~\cite[p. 2046]{d-ms}), we have
$$
N(s)=\sup_{\|\lambda\|\leq 1}N_{\lambda}(s)
$$
where $\lambda$ runs over elements of the dual space of the real
Banach space $C_{\Rr}([0,1])$ with Banach norm $\leq 1$, and
$$
N_{\lambda}(s)=\inf\{ \|x_1\|_1+ s\|x_2\|_2\,\mid\,
(\lambda(\varphi_n))=
x_1+x_2\},
$$
where $x_i\in\ell_i$, and $\|x_i\|_i$ is the $\ell_i$-norm in the
Banach space $\ell_i$ of $i$-th power summable sequences of real
numbers.
\par
As suggested by a result of Holmstedt (see~\cite[Th. 4.1]{holmstedt}
and~\cite[p. 518]{ms}) which gives a two-sided equivalent to
$N_{\lambda}(s)$, we define
$$
x_{1,n}=\begin{cases}
  \lambda(\varphi_n)&\text{ if } 0\leq n\leq s^2\\
  0&\text{ if } n>s^2
\end{cases}
$$
(which therefore determines $x_2$). We then have
$$
N_{\lambda}(s)\leq \|x_1\|_1+s\|x_2\|_2.
$$
\par
Since the linear form $\lambda$ has norm $\leq 1$, we have
$$
|\lambda(\varphi_0)|\leq \|\varphi_0\|_{\infty}= 1
$$
and
$$
|\lambda(\varphi_n)|=\Bigl|\frac{1}{2\pi n} \lambda(s_n)\Bigr|\leq
\frac{1}{2\pi n}
$$
(where $s_n(x)=\sin(2\pi nx)$) for $n\geq 1$. Thus we get
$$
N_{\lambda}(s)\leq 1+\sum_{1\leq n\leq s^2}\frac{1}{2\pi n}+
s\Bigl(\sum_{n>s^2}\frac{1}{4\pi^2n^2}\Bigr)^{1/2} \leq c\log cs
$$
for some $c>0$ and all $s\geq 1$. 
\end{proof}



\section{Final remarks}
\label{sec-other}

\subsection{Variants}

It is clear that the general setting admits many variations. When
looking at other families of one-variable exponential sums, a number
of complications may arise. For instance, in many situations, the
analogue of Lemma~\ref{lm-katz} will have to take into account the
inter-dependencies of the monodromy groups of the analogues of the
shifted Kloosterman sheaves, and of course the estimates of short sums
necessary for tightness are not always known.
\par
One can also consider families of exponential sums parameterized by
multiplicative characters. In that case, we need to exploit Katz's
recent definition of an analogue of the monodromy group for Mellin
transforms over finite fields~\cite{katz-mellin}, in order to have
statements similar to Lemma~\ref{lm-katz}.  But tightness is then
sometimes easier to prove, because of the small ``multiplicative
energy'' of intervals.
\par
Yet another variation would involve re-parameterizing the order of
summation in Kloosterman (or Birch) sums. The most natural way to do
this is to pick a primitive root $g\in\Fpt$, and to consider the
continuous path interpolating between the partial sums
$$
\frac{1}{\sqrt{p}}\sum_{0\leq m\leq (p-2)t}\psi_p(ag^m+g^{-m}),
$$
which has the same start and end points as $K_p(t)$. 
\par
In all of these situations, one can hope to have similar results as
those in this paper. We will come back to such situations in a later
work.
\par
On the other hand, answering the following other very natural question
seems well out of reach of current methods:

\begin{problem}
Consider the random variables
$$
\tilde{K}_X(t)\,:\, p\mapsto K_p(t,1)
$$
on the finite probability space
$$
(\{p\text{ prime }\leq X\},\text{uniform measure}).
$$
\par
Does $(\tilde{K}_X(t))_{t\in [0,1]}$ also converge in law to
$(\kpath(t))_{t\in [0,1]}$?
\end{problem}

This is the analogue for Kloosterman paths of the famous horizontal
Sato-Tate conjecture for Kloosterman sums, which remains completely
open.

Convergence in finite distributions for this sequence of random variables would follow from the general horizontal equidistribution conjecture of \cite{sawin}, by the same argument as the proof of Theorem \ref{th-main}.

\subsection{Similar works}

We conclude with a brief mention of some related works.

(1) From the probabilistic point of view, it is worth observing that
convergence in law of processes (in $C([0,1])$) related to
number-theoretic quantities has already been discovered in a few
cases. The best-known is probably Billingsley's generalization of the
Erd\"os-Kac Theorem, which gives convergence to Brownian motion of
suitable normalized counts of primes dividing integers in varying
intervals (see~\cite[Ch. 4, \S 17]{billingsley}). One can also mention
Bagchi's probabilistic interpretation of Voronin's Universality
Theorem (where convergence in law happens in a space of holomorphic
functions, see~\cite[\S 0.2]{bagchi}).

(2) The papers on paths of exponential sums of Lehmer and
Loxton~\cite{lehmer, loxton1} consider situations rather different,
where quite precise asymptotic evaluation is possible, e.g., in terms
of Fresnel integrals for incomplete quadratic Gauss sums.
In~\cite{loxton2}, Loxton considers (roughly) sums of $\psi_p(g(x))$
where $g$ ranges uniformly over polynomials of some degree $d$ over
$\Fp$, and obtains some limit theorems which are however of a rather
different kind as ours.

(3) Another important case is that of the classical character sums
$$
S(\chi;N)=\sum_{1\leq n\leq N}\chi(n)
$$
for $\chi$ a non-trivial Dirichlet character modulo $q\geq 1$ and
$1\leq N\leq q$. These have been studied extensively from different
perspectives, going back at least as far as Littlewood and Paley.  The
papers of Granville and Soundararajan~\cite{granville-sound}, and of
Bober and Goldmakher~\cite{bober-goldmakher} consider (among other
things!)  the distribution of
$$
M(\chi)=\max_{1\leq N\leq q}S(\chi;N),
$$
the largest modulus of these sums as $N$ varies (compare for
instance~\cite[Th. 1.3]{bober-goldmakher} and Theorem~\ref{th-tails}).
More recently, Bober, Goldmakher, Granville and
Koukoulopoulos~\cite{bggk} have proved the analogue of the main
results of this paper for these sums. The limiting process is very
different however, as it has to take into account the multiplicative
structure of the Fourier coefficients.



\end{document}